\newtheorem{theorem}{Theorem}[section]
\newtheorem{lemma}[theorem]{Lemma}
\newtheorem{corollary}[theorem]{Corollary}
\theoremstyle{definition}
\numberwithin{equation}{section}
\begin{document}

\title[Maps preserving two-sided zero products]{Maps preserving two-sided zero products on Banach algebras}

\author{M. Bre\v sar}

\address{M. Bre\v sar, Faculty of Mathematics and Physics,  University of Ljubljana,
 and Faculty of Natural Sciences and Mathematics, University 
of Maribor, Slovenia}
 \email{matej.bresar@fmf.uni-lj.si}

\author{M. L. C. Godoy} \author{A. R. Villena}
\address{M. L. C. Godoy and A.R. Villena, Departamento de An\' alisis
Matem\' atico, Fa\-cul\-tad de Ciencias, Universidad de Granada,
 Granada, Spain} 
 \email{mgodoy@ugr.es}
 \email{avillena@ugr.es}

\thanks{The first author was supported by the Slovenian Research Agency (ARRS) Grant P1-0288. The second and the third author were supported by MCIU/AEI/FEDER Grant PGC2018-093794-B-I00 and Junta de Andaluc\' ia Grant FQM-185. The second author was also supported by MIU Grant FPU18/00419 and MIU Grant EST19/00466. The third author was also supported by Proyectos I+D+i del programa operativo FEDER-Andaluc\'ia Grant A-FQM-484-UGR18.}

\keywords{Zero product determined Banach algebra, weakly amenable Banach algebra, group algebra of a locally compact group, $C^*$-algebra, algebra of approximable operators,  weighted Jordan homomorphism,   two-sided zero products, linear preserver}

\subjclass[2020]{43A20, 46H05, 46L05}

\begin{abstract}
Let $A$ and $B$ be Banach algebras 
with  bounded approximate identities and let $\Phi:A\to B$ be a surjective continuous linear map 
which preserves two-sided zero products (i.e., $\Phi(a)\Phi(b)=\Phi(b)\Phi(a)=0$ whenever $ab=ba=0$). 
We show that $\Phi$ is a weighted Jordan homomorphism provided that $A$ is zero product determined 
and weakly amenable.  
These conditions are in particular fulfilled when $A$ is the group algebra $L^1(G)$ with $G$ any 
locally compact group. We also study a more general type of continuous linear maps $\Phi:A\to B$ that satisfy 
$\Phi(a)\Phi(b)+\Phi(b)\Phi(a)=0$ whenever $ab=ba=0$. We show in  particular  that if $\Phi$ is surjective and $A$ is a $C^*$-algebra, then $\Phi$ is a weighted Jordan homomorphism.
\end{abstract}

\maketitle

\section{Introduction}\label{s1}

Let $A$ and $B$ be Banach algebras. 
We will say that a linear map $\Phi:A\to B$ {\em preserves two-sided zero products} if for all $a,b\in A$,
\begin{equation}\label{00}
ab=ba=0 \ \implies \Phi(a)\Phi(b)= \Phi(b)\Phi(a)=0.
\end{equation}
Obvious examples of such maps are homomorphisms and antihomomorphisms. 
Their common generalizations are 
 {\em Jordan homomorphisms}, i.e., linear maps $\Psi:A\to B$ satisfying
$$\Psi(a\circ b)=\Psi(a)\circ \Psi(b)\quad\forall a,b\in A,$$
where $a\circ b$ stands for the Jordan product $ab+ba$.
Under the  mild assumption that the centre of $B$ 
 does not contain nonzero nilpotent elements, every Jordan homomorphism from $A$ onto $B$
also preserves  two-sided zero products \cite[Lemma 7.20]{zpdbook}. 
Next we recall that a linear map $W:B\to B$ is called a {\em centralizer} if 
$$W(ab)=W(a)b = aW(b)\quad\forall a,b\in B.$$
We say that $\Phi$ is a {\em weighted Jordan homomorphism} if there exist an invertible centralizer 
$W$ of $B$ and a Jordan homomorphism $\Psi$ from $A$ to $B$ such that $\Phi=W\Psi$. Observe that 
$\Phi$ preserves two-sided zero products if and only if $\Psi$ does. We also remark that, by the 
closed graph theorem,  every centralizer $W$ is automatically continuous if $B$ is a faithful algebra 
(i.e., $bB=Bb=\{0\}$ implies $b=0$), and so, in this case, $\Phi$ is continuous if and only if $\Psi$ is.

Is every surjective continuous linear map $\Phi:A\to B$ which preserves two-sided zero products a weighted Jordan homomorphism?
This question is similar to but, as it turns out, more difficult
than a more thoroughly studied question of describing zero products preserving 
continuous linear maps 
(see the most recent publications \cite{zpdbook, G, GP, K, MS}  for historical remarks and references).
% which preserves zero products  is a weighted homomorphism (see \cite[Section 7.1]{zpdbook}).
 It is known that the answer is positive if either $A$ 
and $B$ are $C^*$-algebras \cite[Theorem 3.3]{ABEV1} or if $A=L^1(G)$ and $B=L^1(H)$ where $G$ and $H$ are locally 
compact groups with $G\in {\rm [SIN]}$  (i.e.,  $G$ has a
base of compact neighborhoods of the identity that is invariant under all inner automorphisms) \cite[Theorem 3.1\,(i)]{AEV}.
In fact,  \cite[Theorem 3.3]{ABEV1} does not require that $\Phi$ satisfies \eqref{00} but only that
for all $a,b\in A$, 
\begin{equation}\label{11}
ab=ba=0 \implies \ \Phi(a)\circ \Phi(b)=0.
\end{equation}
This condition was also considered in the recent  algebraic paper  \cite{BG}. Observe also  that it is more general 
than the condition that $\Phi$ preserves zero Jordan products ($a\circ b=0$ implies $\Phi(a)\circ \Phi(b)=0$) studied 
 in \cite{C}.

The  goal of this paper is to generalize and unify  the aforementioned results from \cite{ABEV1} and \cite{AEV}.
Our approach is based on the concept of a  {\em zero product determined} Banach algebra. These are Banach algebras 
$A$ with the property that every continuous bilinear functional $\varphi:A\times A\to \mathbb{C}$ satisfying 
$\varphi(a,b)=0$ whenever $ab=0$ is of the form $\varphi(a,b)=\tau(ab)$ for some continuous linear functional 
$\tau$ on $A$. We refer to the recent book \cite{zpdbook} for a survey of these algebras.
Let us for now only mention   that they form a fairly large class of Banach algebras whose main representatives, 
$C^*$-algebras and group algebras of locally compact groups, are also weakly amenable Banach algebras having 
bounded approximate identities. 

In Section \ref{s2}, we show that the answer to our question is positive, i.e.,  
a surjective continuous linear map $\Phi:A\to B$ which preserves two-sided zero products is a weighted Jordan homomorphism, 
provided that $A$ is zero product determined and 
weakly amenable, and, additionally, both $A$ and $B$ have  bounded approximate identities (Theorem \ref{24}).  
This in particular implies that that the restriction in \cite[Theorem 3.1\,(i)]{AEV} that  $G\in {\rm [SIN]}$ 
is redundant  (Corollary \ref{25}).
  
Section \ref{s3} is devoted to condition \eqref{11}. We show that \cite[Theorem 3.3]{ABEV1} still holds if $B$ 
is any Banach algebra  with a bounded approximate identity, not only  a $C^*$-algebra (Theorem \ref{t32}). Our 
second main result regarding \eqref{11} considers the case where $A=\mathcal{A}(X)$ is the algebra of approximable 
operators (Theorem \ref{tapp}).

\section{Condition \eqref{00}}
%Addressing the case of group algebras}
\label{s2}

Throughout, for a Banach space $X$,
we write $X^*$ for the dual of $X$ and 
$\langle\,\cdot\,,\,\cdot\,\rangle$ for the duality between $X$ and $X^*$.
Let $A$ be a Banach algebra. We turn $A^*$ into a Banach $A$-bimodule by letting
\[
\langle b,a\cdot\omega\rangle=\langle ba,\omega\rangle,\quad
\langle b,\omega\cdot a\rangle=\langle ab,\omega\rangle\quad
\forall a,b\in A, \ \forall \omega\in A^*.
\]
The space of continuous derivations from $A$ into $A^*$ is denoted by $\mathcal{Z}^1(A,A^*)$.
The main representatives of $\mathcal{Z}^1(A,A^*)$ are the so-called inner derivations. 
The inner derivation implemented by  $\omega\in A^*$ is the map 
$\delta_\omega\colon A\to A^*$ defined by
\[
\delta_\omega(a)=a\cdot\omega-\omega\cdot a\quad\forall a\in A.
\]
The Banach algebra is called \emph{weakly amenable} if every element of $\mathcal{Z}^1(A,A^*)$ is inner.
For a thorough treatment of this property and an account of many interesting examples of weakly amenable 
Banach algebras we refer the reader to \cite{D}. We should remark that  
the group algebra $L^1(G)$ of each locally compact group $G$ and
each $C^*$-algebra are weakly amenable \cite[Theorems 5.6.48 and 5.6.77]{D}.

Our first result is a sharpening of \cite[Theorem 2.7]{ABEV2019} (see also \cite[Theorem 6.6]{zpdbook}).

\begin{theorem}\label{t1}
Let $A$ be a Banach algebra, and suppose that:
\begin{enumerate}
\item[(a)]
$A$ is zero product determined;
\item[(b)]
$A$ has a bounded approximate identity;
\item[(c)]
$A$ is weakly amenable.
\end{enumerate}
Then there exists a constant $C\in\mathbb{R}^+$ such that 
for each continuous bilinear functional $\varphi\colon A\times A\to \mathbb{C}$ with the property that for all $a,b\in A$,
\begin{equation}\label{1436}
 ab=ba=0 \ \implies \ \varphi(a,b)=0
\end{equation}
there exist $\sigma,\tau\in A^*$ such that
\[
\Vert \sigma\Vert\le C\Vert\varphi\Vert,\quad
\Vert\tau\Vert\le C\Vert\varphi\Vert,\]
and
\[
\varphi(a,b)=\sigma(ab)+\tau(ba)\quad\forall a,b\in A.
\]
\end{theorem}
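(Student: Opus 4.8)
The plan is to obtain the bare existence of a representation $\varphi(a,b)=\sigma(ab)+\tau(ba)$ from the qualitative form of the result, which holds under (a)--(c) by \cite[Theorem 6.6]{zpdbook}, and then to upgrade it to the uniform estimate by choosing the pair $(\sigma,\tau)$ carefully. Fix a bounded approximate identity $(e_\lambda)_\lambda$ of $A$ with $\sup_\lambda\|e_\lambda\|\le M$, so that $ae_\lambda\to a$ and $e_\lambda a\to a$ for all $a\in A$, and split $\varphi=\gamma+\eta$ into its symmetric part $\gamma(a,b)=\tfrac12(\varphi(a,b)+\varphi(b,a))$ and antisymmetric part $\eta(a,b)=\tfrac12(\varphi(a,b)-\varphi(b,a))$, both of norm at most $\|\varphi\|$. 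Since the hypothesis ``$ab=ba=0$'' is symmetric in $a,b$, each of $\gamma$ and $\eta$ again satisfies \eqref{1436}.

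For $\gamma$: by \cite[Theorem 6.6]{zpdbook} one may write $\gamma(a,b)=\sigma_1(ab)+\tau_1(ba)$, and averaging against the symmetry of $\gamma$ yields $\gamma(a,b)=\rho(ab+ba)$ with $\rho=\tfrac12(\sigma_1+\tau_1)$. Evaluating at $b=e_\lambda$ and passing to the limit gives $\gamma(a,e_\lambda)=\rho(ae_\lambda+e_\lambda a)\to 2\rho(a)$, hence $\|\rho\|\le\tfrac M2\|\varphi\|$. Here the unknown functionals $\sigma_1,\tau_1$ from \cite[Theorem 6.6]{zpdbook} are used only to produce $\rho$, which is then bounded directly by the approximate identity.

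For $\eta$: again by \cite[Theorem 6.6]{zpdbook}, $\eta(a,b)=\sigma_2(ab)+\tau_2(ba)$, and adding $\eta(a,b)+\eta(b,a)=0$ and specialising $b=e_\lambda$ forces $\sigma_2+\tau_2=0$, so $\eta(a,b)=\kappa(ab-ba)$ for some $\kappa\in A^*$. Equivalently, the operator $D\colon A\to A^*$ with $\langle b,D(a)\rangle=\eta(a,b)$ is the inner derivation $\delta_{-\kappa}$; what I keep from this is only that $D\in\mathcal{Z}^1(A,A^*)$ with $\|D\|=\|\eta\|\le\|\varphi\|$ --- the particular $\kappa$ carries no norm bound. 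Now weak amenability enters, and it does so quantitatively: $\mathcal{Z}^1(A,A^*)$ is norm-closed in $\mathcal{B}(A,A^*)$ and $Z(A^*)=\{\omega\in A^*:a\cdot\omega=\omega\cdot a\ \forall a\in A\}$ is norm-closed in $A^*$, so both are Banach spaces, and the bounded linear bijection $A^*/Z(A^*)\to\mathcal{Z}^1(A,A^*)$, $[\omega]\mapsto\delta_\omega$ (injective by definition of $Z(A^*)$, surjective exactly because $A$ is weakly amenable), is a topological isomorphism by the open mapping theorem. Hence there is a constant $K$ depending only on $A$ and an $\omega_0\in A^*$ with $D=\delta_{\omega_0}$ and $\|\omega_0\|\le K\|D\|\le K\|\varphi\|$; unwinding, $\eta(a,b)=\langle b,\delta_{\omega_0}(a)\rangle=\omega_0(ba)-\omega_0(ab)$.

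Combining the two parts,
\[
\varphi(a,b)=\gamma(a,b)+\eta(a,b)=(\rho-\omega_0)(ab)+(\rho+\omega_0)(ba),
\]
so $\sigma:=\rho-\omega_0$ and $\tau:=\rho+\omega_0$ give the conclusion with $C:=\tfrac M2+K$. The substance of the proof is not these manipulations but two structural points: that $\mathcal{Z}^1(A,A^*)$ is closed, so that the open mapping theorem turns the qualitative hypothesis ``every derivation is inner'' into one with uniform control; and that zero product determinacy, which enters solely through \cite[Theorem 6.6]{zpdbook} --- used only to manufacture $\rho$ and to recognise $D$ as a derivation --- is never invoked in a way that would require estimating $\sigma_1,\tau_1,\sigma_2,\tau_2$. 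A self-contained alternative would be to follow the proof of \cite[Theorem 6.6]{zpdbook} keeping track of constants through Cohen's factorization theorem and the defining property of a zero product determined algebra, but the argument above makes that unnecessary.
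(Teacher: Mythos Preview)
Your proof is correct and follows the same overall architecture as the paper's: split $\varphi$ into its symmetric and antisymmetric parts, control the symmetric representative directly via the bounded approximate identity, and handle the antisymmetric part by recognising a derivation into $A^*$ and invoking weak amenability quantitatively through the open mapping theorem. The one genuine difference is how you certify that $D(a)=\eta(a,\,\cdot\,)$ is a derivation. You invoke the qualitative result \cite[Theorem~6.6]{zpdbook} as a black box to write $\eta(a,b)=\kappa(ab-ba)$, so that $D=\delta_{-\kappa}$ is visibly inner; the paper instead unpacks the ingredients of that theorem, introducing an auxiliary functional $\psi$ (via \cite[Lemma~4.1]{ABEV2017}) and showing that the corrected map $\langle b,D(a)\rangle=\varphi_2(a,b)+\tfrac12\langle a\circ b,\psi\rangle$ is a derivation (via \cite[Lemma~2.6]{ABEV2019}), with the $\psi$-correction later killed by a symmetry argument. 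Your route is cleaner and even gives a marginally better constant (your $\Vert D\Vert\le\Vert\varphi\Vert$ versus the paper's $\Vert D\Vert\le(1+M)\Vert\varphi\Vert$), at the price of using the very theorem being sharpened as a qualitative input; the paper's route is more self-contained but carries the extra $\psi$-bookkeeping.
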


\begin{proof}
Let $(e_\lambda)_{\lambda\in\Lambda}$ be an approximate identity for $A$ of bound $M$.

Since $A$ is weakly amenable, the map $\omega\mapsto\delta_\omega$
from $A^*$ to the Banach space $\mathcal{Z}^1(A,A^*)$ is a continuous linear surjection, 
and so, by the open mapping theorem, there exists a constant $N\in\mathbb{R}^+$ such that, 
for each $D\in\mathcal{Z}^1(A,A^*)$, there exists an $\omega\in A^*$ with
\[
\Vert\omega\Vert\le N\Vert D\Vert
\]
and
\[
a\cdot\omega-\omega\cdot a=D(a)\quad\forall a\in A.
\]

Towards the proof of the theorem,
we proceed through a detailed inspection of the proof of \cite[Theorem 2.7]{ABEV2019}.

Define $\varphi_1,\varphi_2\colon A\times A\to\mathbb C$ by
\begin{equation*}
\begin{split}
\varphi_1(a,b)
&= 
\tfrac{1}{2}\bigl[\varphi(a,b) + \varphi(b,a)\bigr],\\
\varphi_2(a,b)
&= 
\tfrac{1}{2}\bigl[\varphi(a,b) - \varphi(b,a)\bigr]
\quad
\forall a,b\in A.
\end{split}
\end{equation*}
It is clear that both $\varphi_1$ and $\varphi_2$ satisfy condition  \eqref{1436}
and that 
\[
\Vert\varphi_1\Vert\le\Vert\varphi\Vert,
\quad
\Vert\varphi_2\Vert\le\Vert\varphi\Vert.
\]

Since $\varphi_1$ is symmetric it follows from the last assertion of \cite[Lemma 2.6]{ABEV2019} (or \cite[Theorem 6.1]{zpdbook}) that
there exists a $\xi\in A^*$ such that
\begin{equation}\label{mm}
2\varphi_1(a,b)=\langle a\circ b,\xi\rangle
\quad
\forall a,b\in A.
\end{equation}
For each $a\in A$,
we observe that
\[
\langle a\circ e_\lambda,\xi\rangle=2\varphi_1(a,e_\lambda)
\quad\forall\lambda\in\Lambda
\]
and hence that
\[
\vert
\langle a\circ e_\lambda,\xi\rangle\vert
\le
2M\Vert\varphi\Vert\Vert a\Vert
\quad\forall\lambda\in\Lambda.
\]
Taking limit and using  $\lim_{\lambda\in\Lambda} a\circ e_\lambda=2a$ we see that
\[
2\langle a,\xi\rangle\le 2M\Vert\varphi\Vert\Vert a\Vert.
\]
This shows that
\begin{equation}\label{1437}
\Vert\xi\Vert\le M\Vert\varphi\Vert.
\end{equation}

Our next concern will be the behaviour of the skew-symmetric functional $\varphi_2$.
By \cite[Lemma 4.1]{ABEV2017} (or \cite[Theorem 6.1]{zpdbook}),
there exists a $\psi\in A^*$ such that
\[
\varphi_2(ab,c)-\varphi_2(b,ca)+\varphi_2(bc,a)=\langle abc,\psi\rangle
\quad
\forall a,b,c\in A.
\]
The proof  reveals that 
the functional $\psi$ is defined by
\[
\langle a,\psi\rangle=\lim_{\lambda\in\Lambda}\varphi_2(a,e_\lambda)
\quad\forall a\in A,
\]
so that
\begin{equation}\label{1438}
\Vert\psi\Vert\le M\Vert\varphi\Vert.
\end{equation}

By \cite[Lemma 2.6]{ABEV2019} (or the proof of \cite[Theorem 6.5]{zpdbook}) , the map
\[
D\colon A\to A^*,
\quad
\langle b,D(a)\rangle=\varphi_2(a,b)+\frac{1}{2}\langle a\circ b,\psi\rangle
\]
is a continuous derivation, and clearly (using \eqref{1438})
\[
\Vert D\Vert\le \Vert\varphi_2\Vert+\Vert\psi\Vert\le (1+M)\Vert\varphi\Vert.
\]
Consequently, there exists an $\omega\in A^*$ such that
\begin{equation}\label{1439}
\Vert\omega\Vert\le N\Vert D\Vert\le N(1+M)\Vert\varphi\Vert
\end{equation}
and
\[
a\cdot \omega - \omega\cdot a=D(a) 
\quad
\forall a\in A,
\]
and hence
\[
\langle ba-ab,\omega\rangle-  \varphi_2(a,b) = \tfrac{1}{2}\langle a\circ b,\psi\rangle
\quad 
\forall a,b\in A.
\]
Viewing this expression as a bilinear functional on $A\times A$,  
we see that the left-hand side is skew-symmetric and the right-hand side is symmetric. 
Therefore, both sides are zero.
Thus 
\begin{equation}\label{mm2}
\varphi_2(a,b)= \langle ba-ab,\omega\rangle
\quad 
\forall a,b\in A.
\end{equation}
We then define
\[
\sigma=\tfrac{1}{2}\xi-\omega,
\quad
\tau=\tfrac{1}{2}\xi+\omega.
\]
From 
\eqref{1437} and \eqref{1439} we see that
\[
\Vert\sigma\Vert\le(\tfrac{1}{2}M+N+NM)\Vert\varphi\Vert,
\]
\[
\Vert\tau\Vert\le(\tfrac{1}{2}M+N+NM)\Vert\varphi\Vert,
\]
and from \eqref{mm} and \eqref{mm2} we deduce that
\[
\varphi(a,b)=\sigma(ab)+\tau(ba)
\quad
\forall a,b\in A.\qedhere 
\]
\end{proof}

We can now start our consideration of maps preserving two-sided zero products.

\begin{lemma}\label{t2}
Let $A$ be a Banach algebra, and suppose  that:
\begin{enumerate}
\item[(a)]
$A$ is zero product determined;
\item[(b)]
$A$ has a bounded approximate identity;
\item[(c)]
$A$ is weakly amenable.
\end{enumerate}
Let $B$ be a Banach algebra and
let $\Phi\colon A\to B$ be a continuous linear map which preserves two-sided zero products.
Then there exist:
\begin{itemize}
\item
a closed left ideal $L$ of $B$ containing $\Phi(A)$ and 
a continuous linear map $U\colon L\to B$,
\item
a closed right ideal $R$ of $B$ containing $\Phi(A)$ and
a continuous linear map $V\colon R\to B$
\end{itemize}
such that
\[
U(xy)=xU(y),
\quad
V(zx)=V(z)x
\quad
\forall x\in B, \ \forall y\in L, \ \forall z\in R,
\]
\[
U(\Phi(a))=V(\Phi(a))
\quad\forall a\in A,
\]
and
\[
U(\Phi(a\circ b))=V(\Phi(a\circ b))=\Phi(a)\circ\Phi(b)
\quad
\forall a,b\in A.
\]
\end{lemma}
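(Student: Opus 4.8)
The plan is to produce a single continuous linear map $\Theta\colon A\to B$ playing the role of both $U\circ\Phi$ and $V\circ\Phi$, and then to spread it over the left, respectively the right, ideal of $B$ generated by $\Phi(A)$. Fix a bounded approximate identity $(e_\lambda)_{\lambda\in\Lambda}$ of $A$ of bound $M$. For each $\mu\in B^*$ the bilinear functional $(a,b)\mapsto\langle\Phi(a)\Phi(b),\mu\rangle$ is continuous and vanishes whenever $ab=ba=0$ (since then $\Phi(a)\Phi(b)=0$), so Theorem \ref{t1} provides $\sigma_\mu,\tau_\mu\in A^*$ with $\langle\Phi(a)\Phi(b),\mu\rangle=\langle ab,\sigma_\mu\rangle+\langle ba,\tau_\mu\rangle$ for all $a,b\in A$. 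Inserting $b=e_\lambda$ (or $a=e_\lambda$), letting $\lambda$ run, and using $ae_\lambda\to a$ and $e_\lambda a\to a$, one sees that both $\langle\Phi(a)\Phi(e_\lambda),\mu\rangle$ and $\langle\Phi(e_\lambda)\Phi(a),\mu\rangle$ converge to $\langle a,\sigma_\mu+\tau_\mu\rangle$. This common limit is linear in $\mu$, and since $\bigl|\langle\Phi(a)\Phi(e_\lambda),\mu\rangle\bigr|\le\Vert\Phi(a)\Vert\,\Vert\Phi\Vert M\,\Vert\mu\Vert$ it defines an element $\Theta(a)\in B^{**}$ with $\Vert\Theta(a)\Vert\le\Vert\Phi\Vert M\,\Vert\Phi(a)\Vert$; clearly $a\mapsto\Theta(a)$ is linear, and $\langle\Theta(a),\mu\rangle=\lim_\lambda\langle\Phi(a)\Phi(e_\lambda),\mu\rangle=\lim_\lambda\langle\Phi(e_\lambda)\Phi(a),\mu\rangle$ for all $a\in A$, $\mu\in B^*$.

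Next I would record the two properties of $\Theta$ that carry the argument. Adding the two limit relations gives $\langle\Theta(a\circ b),\mu\rangle=\langle a\circ b,\sigma_\mu+\tau_\mu\rangle=\langle\Phi(a)\circ\Phi(b),\mu\rangle$ for every $\mu$, hence $\Theta(a\circ b)=\Phi(a)\circ\Phi(b)$; taking $b=e_\lambda$ this reads $\tfrac12\,\Phi(a)\circ\Phi(e_\lambda)=\Theta\bigl(\tfrac12\,a\circ e_\lambda\bigr)$, and since $\tfrac12\,a\circ e_\lambda\to a$ in $A$ while $\Vert\Theta(\cdot)\Vert\le\Vert\Phi\Vert M\Vert\Phi(\cdot)\Vert$, we get $\tfrac12\,\Phi(a)\circ\Phi(e_\lambda)\to\Theta(a)$ in norm; as each term lies in $B$, which is norm-closed in $B^{**}$, it follows that $\Theta(a)\in B$ for all $a$. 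Thus $\Theta\colon A\to B$ is a continuous linear map with $\Theta(a\circ b)=\Phi(a)\circ\Phi(b)$. Secondly, applying the limit formula for $\Theta$ to the functionals $\mu\cdot b$ and $b\cdot\mu$ (for $b\in B$, $\mu\in B^*$) gives $\langle b\,\Theta(a),\mu\rangle=\lim_\lambda\langle b\,\Phi(a)\Phi(e_\lambda),\mu\rangle$ and $\langle\Theta(a)\,b,\mu\rangle=\lim_\lambda\langle\Phi(e_\lambda)\Phi(a)\,b,\mu\rangle$ for all $a\in A$.

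Now let $L$ be the closed left ideal of $B$ generated by $\Phi(A)$; its dense subspace $B\Phi(A)+\Phi(A)$ (where $B\Phi(A)$ denotes the linear span of all $b\Phi(a)$) consists of the vectors $y=\sum_i b_i\Phi(a_i)+\Phi(a_0)$, and I set $U(y):=\sum_i b_i\Theta(a_i)+\Theta(a_0)$. Combining the limit formula for $\Theta$ from the first paragraph with $\langle b\,\Theta(a),\mu\rangle=\lim_\lambda\langle b\,\Phi(a)\Phi(e_\lambda),\mu\rangle$, one computes $\langle U(y),\mu\rangle=\lim_\lambda\langle y\,\Phi(e_\lambda),\mu\rangle$ for every $\mu\in B^*$. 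This single identity shows at once that $U(y)$ depends only on $y$ and not on its chosen representation, so $U$ is well defined, that $U(y)\in B$, and that $\Vert U(y)\Vert\le\bigl(\sup_\lambda\Vert\Phi(e_\lambda)\Vert\bigr)\Vert y\Vert\le\Vert\Phi\Vert M\Vert y\Vert$; hence $U$ extends to a continuous linear map $U\colon L\to B$. From $x\bigl(\sum_i b_i\Phi(a_i)+\Phi(a_0)\bigr)=\sum_i(xb_i)\Phi(a_i)+x\Phi(a_0)$ together with associativity in $B$ one obtains $U(xy)=xU(y)$ for $x\in B$ and $y$ in the dense subspace, hence for all $y\in L$. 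The construction of $V$ on the closed right ideal $R$ of $B$ generated by $\Phi(A)$ is entirely symmetric, using $\langle\Theta(a)\,b,\mu\rangle=\lim_\lambda\langle\Phi(e_\lambda)\Phi(a)\,b,\mu\rangle$ instead. Finally, reading off the definitions, $U(\Phi(a))=\Theta(a)=V(\Phi(a))$ and $U(\Phi(a\circ b))=\Theta(a\circ b)=\Phi(a)\circ\Phi(b)=V(\Phi(a\circ b))$, which is the assertion.

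The step I expect to be the main obstacle is the well-definedness and boundedness of $U$ and $V$: since $\Phi$ need not be injective, $\Theta$ cannot simply be transported along $\Phi$, and there is a priori no reason for $\sum_i b_i\Theta(a_i)+\Theta(a_0)$ to vanish whenever $\sum_i b_i\Phi(a_i)+\Phi(a_0)$ does. Both this and the estimate on $\Vert U(y)\Vert$ flow from the identity $\langle U(y),\mu\rangle=\lim_\lambda\langle y\,\Phi(e_\lambda),\mu\rangle$, hence ultimately from the decomposition of $\langle\Phi(a)\Phi(b),\mu\rangle$ furnished by Theorem \ref{t1}, that is, from $A$ being zero product determined and weakly amenable and having a bounded approximate identity. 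A secondary point requiring care is that $\Theta$ is initially only $B^{**}$-valued; that it in fact takes values in $B$ is what the norm-convergence $\tfrac12\,\Phi(a)\circ\Phi(e_\lambda)\to\Theta(a)$ delivers.
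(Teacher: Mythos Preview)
Your argument is correct and takes a genuinely different route from the paper's own proof. The paper defines $L$ as the set of $x\in B$ for which the net $\bigl(x\Phi(e_\lambda)\bigr)_\lambda$ is \emph{norm}-convergent and sets $U(x)=\lim_\lambda x\Phi(e_\lambda)$; to show $\Phi(a^2)\in L$ it picks, for each $\lambda$, a norming functional $\xi_\lambda$ for $\Phi(a^2)\Phi(e_\lambda)-\Phi(a)^2$, applies Theorem~\ref{t1} to the bilinear form $(u,v)\mapsto\langle\Phi(u)\Phi(v),\xi_\lambda\rangle$, and crucially uses the \emph{uniform} constant $C$ to make $\sigma_\lambda,\tau_\lambda$ bounded independently of $\lambda$. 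Finally it invokes the Cohen-type factorization $a=bcb=\tfrac12 b\circ(b\circ c)-\tfrac12 b^2\circ c$ to pass from $\Phi(a^2)\in L\cap R$ to $\Phi(a)\in L\cap R$. Your approach avoids both devices: you apply Theorem~\ref{t1} once for each fixed $\mu\in B^*$ (so only the qualitative existence of $\sigma_\mu,\tau_\mu$ is needed, not the constant $C$), obtain $\Theta(a)$ as a weak* limit, and then upgrade to norm convergence and $\Theta(a)\in B$ via the Jordan identity $\Theta(\tfrac12 a\circ e_\lambda)=\tfrac12\Phi(a)\circ\Phi(e_\lambda)$, which replaces the factorization theorem. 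Your $L$ is then the closed left ideal generated by $\Phi(A)$, and the well-definedness and boundedness of $U$ are read off from the single formula $\langle U(y),\mu\rangle=\lim_\lambda\langle y\Phi(e_\lambda),\mu\rangle$. In short, the paper's proof gives the stronger information that $\Phi(a^2)\Phi(e_\lambda)\to\Phi(a)^2$ in norm, while your argument is more economical: it needs neither the quantitative sharpening in Theorem~\ref{t1} nor the factorization result from \cite{AL}.
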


\begin{proof}
Let $(e_\lambda)_{\lambda\in\Lambda}$ be an approximate identity for $A$ of bound $M$, and
let $C$ be the constant given in Theorem \ref{t1}.

We define  
\[
L=\left
\{x\in B\colon \text{ the net } \bigl(x\Phi(e_\lambda)\bigr)_{\lambda\in\Lambda}\text{ is convergent}\right\},
\]
\[
U\colon L\to B,\quad
U(x)=\lim_{\lambda\in\Lambda}x\Phi(e_\lambda) \quad\forall x\in L,
\]
and
\[
R=\left
\{x\in B\colon \text{ the net } \bigl(\Phi(e_\lambda)x\bigr)_{\lambda\in\Lambda}\text{ is convergent}\right\},
\]
\[
V\colon R\to B,\quad
V(x)=\lim_{\lambda\in\Lambda}\Phi(e_\lambda)x \quad\forall x\in R.
\]
It is clear that $L$ is a left ideal of $B$, $R$ is a right ideal of $B$, and
routine verifications,
using that the net $(\Phi(e_\lambda))_{\lambda\in\Lambda}$ is bounded, 
show that both $L$ and $R$ are closed subspaces of $B$.
It is also obvious that both $U$ and $V$ are continuous linear maps with
$\Vert U\Vert\le M\Vert\Phi\Vert$ and $\Vert V\Vert\le M\Vert\Phi\Vert$ and that
\[
U(xy)=xU(y),
\quad
V(zx)=V(z)x
\quad
\forall x\in B, \ \forall y\in L, \ \forall z\in R.
\]

We claim that 
\begin{equation}\label{1111}
\bigl(\Phi(a^2)\Phi(e_\lambda)\bigr)_{\lambda\in\Lambda}\to\Phi(a)^2\quad\forall a\in A.
\end{equation}
Fix an $a\in A$. By the Hahn-Banach theorem, for each $\lambda\in\Lambda$ there exists a $\xi_\lambda\in B^*$
with $\Vert\xi_\lambda\Vert=1$ and 
\begin{equation}\label{1112}
\bigl\langle \Phi(a^2)\Phi(e_\lambda)-\Phi(a)^2,\xi_\lambda\bigr\rangle=
\bigl\Vert \Phi(a^2)\Phi(e_\lambda)-\Phi(a)^2\bigr\Vert.
\end{equation}
For each $\lambda\in\Lambda$, we consider the continuous bilinear functional
\[
\varphi_\lambda\colon A\times A\to\mathbb{C},
\quad
\varphi_\lambda(u,v)=\langle \Phi(u)\Phi(v),\xi_\lambda\rangle
\quad
\forall u,v\in A,
\]
which clearly satisfies \eqref{1436} and 
\[
\Vert\varphi_\lambda\Vert\le\Vert\Phi\Vert^2.
\]
Hence Theorem \ref{t1} yields the existence of $\sigma_\lambda,\tau_\lambda\in A^*$ such that
\[
\Vert\sigma_\lambda\Vert
\le 
C\Vert\Phi\Vert^2,
\quad
\Vert\tau_\lambda\Vert
\le 
C\Vert\Phi\Vert^2,
\]
and
\begin{equation}\label{1113}
\langle\Phi(u)\Phi(v),\xi_\lambda\rangle=
\sigma_\lambda(uv)+\tau_\lambda(vu)
\quad
\forall u,v\in A.
\end{equation}
From \eqref{1112} and \eqref{1113} we deduce that
\begin{equation}\label{1114}
\begin{split}
\Vert\Phi(a^2)\Phi(e_\lambda)-\Phi(a)^2\Vert
&=
\langle \Phi(a^2)\Phi(e_\lambda),\xi_\lambda\rangle
-\langle\Phi(a)^2,\xi_\lambda\rangle\\
&=
\sigma_\lambda(a^2e_\lambda)+\tau_\lambda(e_\lambda a^2)
-\sigma_\lambda(a^2)-\tau_\lambda(a^2)\\
&=
\sigma_\lambda(a^2e_\lambda-a^2)+
\tau_\lambda(e_\lambda a^2-a^2) 
\quad
\forall \lambda\in\Lambda.
\end{split}
\end{equation}
We now observe that
\begin{equation*}
\begin{split}
\vert \sigma_\lambda(a^2e_\lambda-a^2)\vert
&\le
C\Vert\Phi\Vert^2\Vert a^2e_\lambda-a^2\Vert,\\
\vert\tau_\lambda(e_\lambda a^2-a^2)\vert
&\le
C\Vert\Phi\Vert^2\Vert e_\lambda a^2-a^2\Vert
\quad\forall \lambda\in\Lambda,
\end{split}
\end{equation*}
and so, taking limits and using that
\[
\lim_{\lambda\in\Lambda}\Vert a^2e_\lambda-a^2\Vert=
\lim_{\lambda\in\Lambda}\Vert e_\lambda a^2-a^2\Vert=0,
\]
we see that
\[
\lim_{\lambda\in\Lambda} \sigma_\lambda(a^2e_\lambda-a^2)=
\lim_{\lambda\in\Lambda} \tau_\lambda(e_\lambda a^2-a^2)=0.
\]
Taking limit in \eqref{1114}  we now deduce that
\[
\lim_{\lambda\in\Lambda}\Vert\Phi(a^2)\Phi(e_\lambda)-\Phi(a)^2\Vert=0,
\]
which gives \eqref{1111}.

Of course, \eqref{1111} gives
\begin{equation}\label{117a}
\Phi(a^2)\in L,  \quad U(\Phi(a^2))=\Phi(a)^2 \quad\forall a\in A.
\end{equation}

In the same way as \eqref{1111} one proves that
\begin{equation*}
\bigl(\Phi(e_\lambda)\Phi(a^2)\bigr)_{\lambda\in\Lambda}\to\Phi(a)^2\quad\forall a\in A,
\end{equation*}
which clearly yields
\begin{equation}\label{117b}
\Phi(a^2)\in R, \quad V(\Phi(a^2))=\Phi(a)^2 \quad\forall a\in A.
\end{equation}

It remains to prove that $\Phi(A)\subset L\cap R$.
From \eqref{117a} and \eqref{117b} we deduce immediately that
\begin{equation}\label{1341}
\begin{split}
&\Phi(a\circ b)\in L\cap R,\\
%\quad
U(\Phi(a\circ b))=&V(\Phi(a\circ b))=\Phi(a)\circ\Phi(b)\quad\forall a,b\in A.\end{split}
\end{equation}
For each $a\in A$, 
\cite[Theorem II.16]{AL} gives $b,c\in A$ such that $a=bcb$,
so that 
\[
a=\tfrac{1}{2}b\circ (b\circ c)-\tfrac{1}{2}b^2\circ c
\]
and \eqref{1341} then gives $\Phi(a)\in L\cap R$ and further
$U(\Phi(a))=V(\Phi(a))$.
\end{proof}

\begin{lemma}\label{t3}
Let $A$ and $B$ be Banach algebras, and suppose  that:
\begin{enumerate}
\item[(a)]
$A$ is zero product determined;
\item[(b)]
$A$ has a bounded approximate identity;
\item[(c)]
$A$ is weakly amenable;
\item[(d)]
$B$ is faithful.
\end{enumerate}
Let $\Phi\colon A\to B$ be a continuous linear map having dense range and preserving two-sided zero products.
Then there exists an injective continuous centralizer $W\colon B\to B$ such that
$$W(\Phi(a\circ b))=\Phi(a)\circ\Phi(b)\quad\forall a,b\in A.$$
\end{lemma}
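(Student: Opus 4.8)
The plan is to assemble the two one-sided maps of Lemma~\ref{t2} into a single centralizer of $B$. Since $L$ is a closed subspace of $B$ containing $\Phi(A)$ and $\Phi(A)$ is dense in $B$, we have $L=B$; likewise $R=B$. Thus $U,V\colon B\to B$ are everywhere-defined continuous linear maps with $U(xy)=xU(y)$ and $V(xy)=V(x)y$ for all $x,y\in B$, and $U=V$ on the dense subspace $\Phi(A)$, hence $U=V$ on $B$ by continuity. Writing $W$ for this common map, we obtain $W(xy)=W(x)y=xW(y)$ for all $x,y\in B$, so $W$ is a continuous centralizer of $B$, and $W(\Phi(a\circ b))=\Phi(a)\circ\Phi(b)$ for all $a,b\in A$ is exactly the last identity of Lemma~\ref{t2}.

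It remains to show that $W$ is injective, which is the main point. Let $K=\ker W$; since $W$ is a centralizer, $K$ is a closed two-sided ideal, and for every $x\in K$ and $b\in B$ we have $W(b)x=W(bx)=bW(x)=0$ and $xW(b)=W(xb)=W(x)b=0$, so $K$ annihilates $W(B)$ on both sides. Since $\Phi(a)\circ\Phi(b)=W(\Phi(a\circ b))\in W(B)$, this gives
\[
x\bigl(\Phi(a)\circ\Phi(b)\bigr)=\bigl(\Phi(a)\circ\Phi(b)\bigr)x=0\qquad(x\in K,\ a,b\in A).
\]
Feeding this into the centralizer identities $\Phi(a)W(y)=W(\Phi(a)y)$ and $W(y)\Phi(a)=W(y\Phi(a))$, an induction on the number of factors shows that, for each fixed $x\in K$, the maps
\[
(a_{1},\dots,a_{n})\longmapsto x\,\Phi(a_{1})\cdots\Phi(a_{n})
\quad\text{and}\quad
(a_{1},\dots,a_{n})\longmapsto \Phi(a_{1})\cdots\Phi(a_{n})\,x
\]
are totally antisymmetric for every $n\ge 2$; in particular they vanish whenever two of the arguments coincide.

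To conclude, I would combine these antisymmetry relations with the factorization $a=bcb$ of \cite[Theorem~II.16]{AL} — which, as in the proof of Lemma~\ref{t2}, writes every element of $\Phi(A)$ as a finite linear combination of elements $\Phi(u\circ v)$ with $u,v\in A$ — together with the density of $\Phi(A)$ in $B$, so as to force $xB=Bx=\{0\}$; faithfulness of $B$ then yields $x=0$. The main difficulty is precisely here: one cannot simply oppose faithfulness to the relations $xW(B)=W(B)x=\{0\}$, because $\overline{W(B)}=\overline{B\circ B}$ may be a proper closed ideal even in a faithful algebra; so the argument has to exploit the finer antisymmetric structure of the iterated products $x\,\Phi(a_{1})\cdots\Phi(a_{n})$, and the good factorization properties available inside $A$, rather than mere density of the range of $W$.
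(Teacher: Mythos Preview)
Your construction of $W$ is correct and matches the paper: density of $\Phi(A)$ forces $L=R=B$, continuity forces $U=V$, and the resulting map is a continuous centralizer satisfying the required Jordan identity.

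The gap is in the injectivity argument. You correctly prove that $K=\ker W$ is a two-sided ideal with $KW(B)=W(B)K=\{0\}$, and hence that, for every $x\in K$, the multilinear maps $(y_1,\dots,y_n)\mapsto xy_1\cdots y_n$ on $B^n$ are totally antisymmetric (by density this extends from $\Phi(A)$ to $B$). But total antisymmetry alone does \emph{not} force vanishing, and the detour through the factorization $a=bcb$ in $A$ is a red herring: it only tells you $x\Phi(a)\in K$, which you already knew. What you are missing is a short algebraic trick that lives entirely in $B$. Use the $n=2$ antisymmetry with one argument equal to a \emph{product}: for $x\in K$ and $y,z,w\in B$,
\[
xyzw \;=\; -xzyw \;=\; xzwy \;=\; x(zw)y \;=\; -xy(zw) \;=\; -xyzw,
\]
where the first two equalities are adjacent transpositions in the $n=3$ map, and the penultimate one is the $n=2$ antisymmetry applied to the pair $\bigl(y,\,zw\bigr)$. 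This gives $KB^{3}=\{0\}$, and the mirror argument gives $B^{3}K=\{0\}$.

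The final step, which you also do not supply, is the elementary fact that in a faithful algebra $bB^{3}=B^{3}b=\{0\}$ implies $b=0$: one bootstraps down through $B^{2}bB^{2}=\{0\}$, then $BbB=B^{2}b=bB^{2}=\{0\}$, then $Bb=bB=\{0\}$. This is exactly how the paper finishes; no appeal to factorization in $A$ or to density of $W(B)$ is needed once $KB^{3}=B^{3}K=\{0\}$ is in hand.
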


\begin{proof}
We apply Lemma \ref{t2}.
Since $\Phi$ has dense range, it follows that $L=R=B$ and that $U=V$.
Set $W=U\,(=V)$. Then $W$ is a centralizer on $B$ and
\[
W(\Phi(a\circ b))=\Phi(a)\circ\Phi(b)\quad\forall a,b\in A.
\]
The only point remaining concerns the injectivity of $W$.
We claim that
\begin{equation}\label{1732}
\ker W B^3=B^3\ker W=\{0\}.
\end{equation}
Let $x\in\ker W$.
For each $a\in A$, we have
\[
0=W(x)\Phi(a^2)=W(x\Phi(a^2))=xW(\Phi(a^2))=x\Phi(a)^2,
\]
and, since the range of $\Phi$ is dense, we arrive at
\[
xy^2=0\quad\forall y\in B.
\]
We thus get
\[
x(yz+zy)=0\quad\forall x\in\ker W, \ \forall y,z\in B.
\]
For all $x\in\ker W$ and $y,z,w\in B$ we have (using that $xz\in\ker W$)
\[
(xyz)w=
(-xzy)w=-(xz)yw=
(xz)wy=x(zw)y=
-xy(zw),
\]
whence $xyzw=0$, and so $\ker W B^3=\{0\}$.
Similarly we see that $B^3\ker W=\{0\}$. Thus, \eqref{1732} holds.

It is an elementary exercise to show that an element $b$ in a faithful algebra $B$ satisfying  $b B^3=B^3b =\{0\}$ must be $0$. Indeed, one first observes that every $c\in B^2 b B^2$ satisfies $cB=Bc=\{0\}$, which yields $B^2 b B^2 = \{0\}$. Similarly we see that this implies $B^2 b B =  BbB^2 =\{0\}$, hence $BbB=B^2b=bB^2=\{0\}$, and finally $bB=Bb=\{0\}$. Therefore, $b=0$.

Thus, \eqref{1732} shows that $\ker W=\{0\}$.
\end{proof}

\begin{lemma}\label{l4}
Let $A$ and $B$ be Banach algebras, and 
suppose that $B$ has a bounded approximate identity.
Let $\Phi\colon A\to B$ be a surjective linear map, and
let $W\colon B\to B$ be a linear map such that
$$W(\Phi(a\circ b))=\Phi(a)\circ\Phi(b)\quad\forall
a,b\in A.$$ Then $W$ is surjective.
\end{lemma}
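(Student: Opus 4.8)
The plan is to prove directly that $W$ is onto by showing $W(B)=B$. Since $W$ is linear, $W(B)$ is a linear subspace of $B$, and by the relation $W(\Phi(a\circ b))=\Phi(a)\circ\Phi(b)$ it contains every element of the form $\Phi(a)\circ\Phi(b)$ with $a,b\in A$. Because $\Phi$ is surjective, this means $W(B)$ contains $u\circ v$ for all $u,v\in B$, hence it contains the linear span $J$ of $\{u\circ v:u,v\in B\}$. So it is enough to show that $J=B$, i.e.\ that every element of $B$ is a finite linear combination of Jordan products of elements of $B$.

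For this I would appeal to Cohen's factorization theorem: since $B$ has a bounded approximate identity, every element of $B$ is a product of two elements of $B$, and applying this a second time every $b\in B$ can be written as $b=c_1c_2c_3$ with $c_1,c_2,c_3\in B$. One then checks the elementary identity
\[
2\,c_1c_2c_3=c_1\circ(c_2c_3)+(c_1c_2)\circ c_3-c_2\circ(c_3c_1),
\]
which exhibits $b$ as a linear combination of Jordan products and so places it in $J$. Unwinding the definitions: writing $c_1=\Phi(a_1)$, $c_2c_3=\Phi(a_2)$, $c_1c_2=\Phi(a_3)$, $c_3=\Phi(a_4)$, $c_2=\Phi(a_5)$, $c_3c_1=\Phi(a_6)$ (all possible by surjectivity of $\Phi$) and using the defining relation for $W$, the right-hand side becomes $W$ applied to an element of $B$, so $b\in W(B)$, giving surjectivity of $W$.

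The step I expect to be the crux is the reduction to $J=B$, and in particular the realisation that a single factorization $b=c_1c_2$ is not sufficient: $c_1\circ c_2=c_1c_2+c_2c_1$ carries the extra term $c_2c_1$ with no obvious way to cancel it, whereas passing to triple products (the second use of Cohen's theorem) produces the three cyclic terms needed for the cancellation in the identity above. I also note that the more naive idea of writing $2b=\lim_\lambda b\circ e_\lambda$ for a bounded approximate identity $(e_\lambda)$ of $B$ does not work here, since $W(B)$ need not be closed. The rest—verifying the displayed identity and transporting the factors through $\Phi$ and $W$—is routine.
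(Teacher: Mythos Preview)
Your proof is correct and follows essentially the same approach as the paper: both show that every element of $B$ is a linear combination of Jordan products by first writing it as a triple product and then applying a polarization-type identity. The only cosmetic difference is that the paper invokes the symmetric factorization $x=yzy$ from \cite{AL} together with the identity $yzy=\tfrac{1}{2}\,y\circ(y\circ z)-\tfrac{1}{2}\,y^2\circ z$, whereas you iterate Cohen's theorem and use $2c_1c_2c_3=c_1\circ(c_2c_3)+(c_1c_2)\circ c_3-c_2\circ(c_3c_1)$.
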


\begin{proof}
Set $x\in B$.
By \cite[Theorem II.16]{AL}, there exist $y,z\in B$ such that $x=yzy$,
so that 
\[
x=\tfrac{1}{2}y\circ (y\circ z)-\tfrac{1}{2}y^2\circ z.
\]
Since $\Phi$ is surjective, we can choose $a,b,c,d\in A$ with
\[
\Phi(a)=y,\quad
\Phi(b)=y\circ z,\quad
\Phi(c)=y^2,\quad
\Phi(d)=z.
\]
The condition on $W$ now gives
\begin{align*}
%\begin{split}
W\bigl(
\Phi(\tfrac{1}{2}a\circ b-\tfrac{1}{2}c\circ d)\bigr)
&=
\tfrac{1}{2}\Phi(a)\circ\Phi(b)-
\tfrac{1}{2}\Phi(c)\circ\Phi(d)\\
&=
\tfrac{1}{2}y\circ(y\circ z)-\tfrac{1}{2}y^2\circ z
=
x.
%\end{split}
\qedhere 
\end{align*}
\end{proof}

We are now ready to establish our main result.

\begin{theorem}\label{24}
Let $A$ and $B$ be Banach algebras, and  suppose that:
\begin{enumerate}
\item[(a)]
$A$ is zero product determined;
\item[(b)]
$A$ has a bounded approximate identity;
\item[(c)]
$A$ is weakly amenable;
\item[(d)]
$B$ has a bounded approximate identity.
\end{enumerate}
Let $\Phi\colon A\to B$ be a surjective continuous linear map which preserves two-sided zero products.
Then $\Phi$ is a weighted Jordan homomorphism.
\end{theorem}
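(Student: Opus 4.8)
The plan is to assemble Lemmas \ref{t3} and \ref{l4} and then tidy up with elementary centralizer manipulations. First I would observe that hypothesis (d) already forces $B$ to be faithful: if $b\in B$ satisfies $bB=Bb=\{0\}$ and $(e_\lambda)$ is a bounded approximate identity for $B$, then $b=\lim_\lambda e_\lambda b=0$. Hence all the hypotheses of Lemma \ref{t3} are met, since surjectivity of $\Phi$ in particular gives dense range, and that lemma produces an injective continuous centralizer $W\colon B\to B$ satisfying $W(\Phi(a\circ b))=\Phi(a)\circ\Phi(b)$ for all $a,b\in A$. Lemma \ref{l4} — applicable because $\Phi$ is surjective and $B$ has a bounded approximate identity — then shows that $W$ is surjective, so $W$ is a bijective continuous centralizer. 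Since $B$ is faithful, the closed graph theorem makes $W^{-1}$ continuous, and one checks directly that $W^{-1}$ is again a centralizer (writing $a=W(a')$ and using $W^{-1}(W(a')b)=W^{-1}(W(a'b))=a'b$, and symmetrically on the other side).

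Next I would put $\Psi=W^{-1}\Phi$, so that $\Phi=W\Psi$ with $W$ an invertible centralizer, and verify that $\Psi$ is a Jordan homomorphism. The key identity is that for a centralizer $W$ one has $W(x)W(y)=W(xW(y))=W(W(xy))=W^2(xy)$ for all $x,y\in B$; applying it with $x=\Psi(a)$ and $y=\Psi(b)$ gives $\Phi(a)\circ\Phi(b)=W^2(\Psi(a)\Psi(b))+W^2(\Psi(b)\Psi(a))=W^2(\Psi(a)\circ\Psi(b))$. On the other hand, $W(\Phi(a\circ b))=W(W(\Psi(a\circ b)))=W^2(\Psi(a\circ b))$. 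Comparing these with the defining relation $W(\Phi(a\circ b))=\Phi(a)\circ\Phi(b)$ yields $W^2(\Psi(a\circ b))=W^2(\Psi(a)\circ\Psi(b))$, and since $W$ (hence $W^2$) is injective we conclude $\Psi(a\circ b)=\Psi(a)\circ\Psi(b)$. Thus $\Phi=W\Psi$ exhibits $\Phi$ as a weighted Jordan homomorphism (with $\Psi$ automatically continuous, being $W^{-1}\Phi$).

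Almost all of the real work has already been absorbed into Theorem \ref{t1} and Lemmas \ref{t2}, \ref{t3}, so there is no genuinely hard step remaining; the points that require care are the two ``routine'' observations — that (d) makes $B$ faithful, which is what lets Lemma \ref{t3} apply verbatim, and that a bijective centralizer has a centralizer inverse — together with the cancellation at the end, where it is essential that we have full injectivity of $W$ rather than only the weaker annihilator identities $\ker W\,B^3=B^3\ker W=\{0\}$ used inside the proof of Lemma \ref{t3}.
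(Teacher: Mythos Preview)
Your proposal is correct and follows essentially the same route as the paper's proof: faithfulness of $B$ from (d), then Lemma~\ref{t3} for the injective centralizer $W$, Lemma~\ref{l4} for its surjectivity, and finally $\Psi=W^{-1}\Phi$. The only difference is that you spell out in more detail why $W^{-1}$ is again a centralizer and why $\Psi$ is Jordan (via the identity $W(x)W(y)=W^2(xy)$ and injectivity of $W^2$), whereas the paper simply asserts that this follows from \eqref{1807}.
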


\begin{proof}
Since $B$ has a bounded approximate identity, it follows that $B$ is faithful.
We conclude from Lemma \ref{t3} that there exists an injective continuous centralizer $W\colon B\to B$ such that
\begin{equation}\label{1807}
W(\Phi(a\circ b))=\Phi(a)\circ\Phi(b)\quad\forall a,b\in A.
\end{equation}
Lemma \ref{l4} now shows that $W$ is surjective.

Having proved that $W$ is an invertible centralizer, we can define $\Psi=W^{-1}\Phi$
which is a surjective continuous linear map and, further, we deduce from \eqref{1807}
that $\Psi$ is a Jordan homomorphism. Of course, $\Phi=W\Psi$.
\end{proof}

The crucial examples of zero product determined Banach algebras are
the group algebras $L^1(G)$ for each locally compact group $G$
and $C^*$-algebras \cite[Theorems 5.19 and 5.21]{zpdbook}.
Furthermore, these Banach algebras are also weakly amenable and have bounded approximate identities.
Therefore, it is legitimate to apply Theorem \ref{24} in the case where $A$ is a group algebra or a 
$C^*$-algebra.

\begin{corollary}\label{26}
Let $G$ be a locally compact group,
let $B$ be a Banach algebra having a bounded approximate identity, and
let $\Phi\colon L^1(G)\to B$ be a surjective continuous linear map which preserves two-sided zero products.
Then $\Phi$ is a weighted Jordan homomorphism.
\end{corollary}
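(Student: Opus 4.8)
The plan is to deduce this as a direct specialization of Theorem \ref{24}: the only work is to verify that $A=L^1(G)$ satisfies hypotheses (a), (b), (c) of that theorem, after which the conclusion follows word for word.

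First I would record property (a). The group algebra of an arbitrary locally compact group is zero product determined; this is \cite[Theorem 5.19]{zpdbook}, a substantial ingredient that we are entitled to black-box. Next, property (b): $L^1(G)$ always carries a bounded approximate identity of bound $1$, obtained in the classical way by fixing a neighbourhood basis $(U_\lambda)$ of the identity of $G$ consisting of relatively compact symmetric sets and setting $e_\lambda=\mu(U_\lambda)^{-1}\chi_{U_\lambda}$, where $\mu$ denotes left Haar measure. Finally, property (c): weak amenability of $L^1(G)$ is Johnson's theorem, already quoted in Section \ref{s2} as \cite[Theorem 5.6.48]{D}. Since $B$ is assumed to possess a bounded approximate identity, hypothesis (d) of Theorem \ref{24} holds as well.

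With all four hypotheses in place, Theorem \ref{24} applies to the surjective continuous two-sided zero product preserving map $\Phi\colon L^1(G)\to B$ and yields at once that $\Phi$ is a weighted Jordan homomorphism, which is the assertion.

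There is no real obstacle here; the corollary is merely Theorem \ref{24} read in a particular case. The point worth stressing is that none of (a), (b), (c) places any constraint on the group $G$ — in particular the ${\rm [SIN]}$ condition required in \cite[Theorem 3.1\,(i)]{AEV} plays no role — so the statement covers every locally compact group.
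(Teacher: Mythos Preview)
Your argument is correct and matches the paper's approach exactly: the corollary is stated immediately after the observation that $L^1(G)$ is zero product determined, weakly amenable, and has a bounded approximate identity, so that Theorem \ref{24} applies directly. No separate proof is given in the paper beyond that remark, and your verification of hypotheses (a)--(d) is precisely what is intended.
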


Our final corollary generalizes  \cite[Theorem 3.1\,(i)]{AEV}.
\begin{corollary}\label{25}
Let $G$ and $H$ be locally compact groups, and 
let $\Phi\colon L^1(G)\to L^1(H)$ be a surjective continuous linear map which preserves two-sided zero products.
Then there exist  
a surjective continuous Jordan homomorphism $\Psi\colon L^1(G)\to L^1(H)$ and
an invertible central measure $\mu\in M(H)$ such that 
$\Phi(f)=\mu\ast\Psi(f)$ for each $f\in L^1(G)$.
\end{corollary}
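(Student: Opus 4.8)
The plan is to combine Theorem~\ref{24} with the classical description of the multiplier algebra of a group algebra due to Wendel. The group algebra $L^1(G)$ is zero product determined, weakly amenable and has a bounded approximate identity, while $L^1(H)$ has a bounded approximate identity; hence Theorem~\ref{24} (equivalently, Corollary~\ref{26}) applies and shows that $\Phi$ is a weighted Jordan homomorphism. Unravelling the proof of Theorem~\ref{24}, this means that there are an invertible centralizer $W\colon L^1(H)\to L^1(H)$ (automatically continuous, since $L^1(H)$ is faithful) and a surjective continuous Jordan homomorphism $\Psi\colon L^1(G)\to L^1(H)$ with $\Phi=W\Psi$ and $\Psi=W^{-1}\Phi$. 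Everything therefore reduces to identifying the invertible centralizers of $L^1(H)$.

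Since a centralizer $W$ satisfies in particular $W(f\ast g)=W(f)\ast g$ for all $f,g\in L^1(H)$, it is a continuous left multiplier of $L^1(H)$, and Wendel's theorem provides a unique $\mu\in M(H)$ with $W(f)=\mu\ast f$ for all $f\in L^1(H)$. The remaining defining identities of a centralizer then force $\mu$ to be central: from $W(f\ast g)=f\ast W(g)$ we get $\mu\ast f\ast g=f\ast\mu\ast g$ for all $f,g$, and letting $g$ run through a bounded approximate identity of $L^1(H)$ yields $\mu\ast f=f\ast\mu$ for every $f\in L^1(H)$, i.e.\ $\mu\in Z(M(H))$.

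Next I would check that $\mu$ is invertible in $M(H)$. The inverse of the bijective centralizer $W$ is again a centralizer — from $W\bigl(W^{-1}(x)y\bigr)=xy=W\bigl(xW^{-1}(y)\bigr)$ together with the injectivity of $W$ one reads off $W^{-1}(xy)=W^{-1}(x)y=xW^{-1}(y)$ — so, by the previous paragraph, $W^{-1}(f)=\nu\ast f$ for some central $\nu\in M(H)$. Then $\nu\ast\mu\ast f=f=\mu\ast\nu\ast f$ for all $f\in L^1(H)$, and testing against a bounded approximate identity gives $\nu\ast\mu=\mu\ast\nu=\delta_e$, the identity of $M(H)$; thus $\mu$ is an invertible central measure. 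Finally $\Phi(f)=W(\Psi(f))=\mu\ast\Psi(f)$ for every $f\in L^1(G)$, which is the desired conclusion, with $\Psi$ having the required properties by Theorem~\ref{24}. There is no genuine obstacle here: the content of the corollary lies entirely in Theorem~\ref{24}, and the only external input is Wendel's identification of the multipliers of $L^1(H)$ with $M(H)$, after which the argument is a routine manipulation with the bounded approximate identity.
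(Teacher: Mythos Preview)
Your proof is correct and follows essentially the same route as the paper: apply Corollary~\ref{26} (i.e.\ Theorem~\ref{24}) to get $\Phi=W\Psi$ with $W$ an invertible centralizer, and then invoke Wendel's theorem to identify $W$ with convolution by an invertible central measure $\mu\in M(H)$. The paper simply states that a centralizer corresponds to a central element of the multiplier algebra $M(H)$, whereas you spell out the verification that $\mu$ is central and invertible; this extra detail is fine and the arguments coincide.
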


\begin{proof}
By Corollary \ref{26}, there exist an invertible cetralizer $W$ of $L^1(H)$ and a 
surjective continuous Jordan homomorphism $\Psi\colon L^1(G)\to L^1(H)$ such that
$\Phi=W\Psi$.
The centralizer $W$ can be thought of as an element of the centre of the multiplier algebra
of $L^1(H)$ which is, by Wendel's Theorem (see \cite[Theorem 3.3.40]{D}), isomorphic to
the measure algebra $M(H)$. This gives a measure $\mu\in M(H)$ as required.
\end{proof}

\section{Condition \eqref{11}}
%Addressing the case of operator algebras}
\label{s3}

We will not discuss Theorem \ref{24} in the case where $A$ is a $C^*$-algebra,
because in this case condition \eqref{00} can be weakened to condition \eqref{11}.
Showing this is the main purpose of this section.

\begin{lemma}\label{31}
Let $A$ be a Banach algebra, and suppose  that:
\begin{enumerate}
\item[(a)]
$A$ is zero product determined;
\item[(b)]
$A$ has a bounded approximate identity.
\end{enumerate}
Let $B$ be a Banach algebra and
let $\Phi\colon A\to B$ be a continuous linear map 
satisfying condition \eqref{11}.
%such that for all $a,b\in A$,
%\begin{equation*}
%ab=ba=0 \implies \ \Phi(a)\circ\Phi(b)=0.
%\end{equation*}
Then there exist a closed linear subspace $J$ of $B$ containing $\Phi(A)$ and 
a continuous linear map $W\colon J\to B$ such that
$$W(\Phi(a\circ b))=\Phi(a)\circ\Phi(b)\quad\forall a,b\in A.$$
Moreover, if $B$ has a bounded approximate identity and  $\Phi$ is surjective, then  $W$ is a surjective map from $B$ onto itself.
\end{lemma}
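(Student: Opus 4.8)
The plan is to run the scheme of Lemma~\ref{t2}, but exploiting the fact that condition~\eqref{11} only ever produces a \emph{symmetric} bilinear functional, so that the weak amenability hypothesis (used in Theorem~\ref{t1} solely to handle the skew-symmetric part) is not needed here. Fix an approximate identity $(e_\lambda)_{\lambda\in\Lambda}$ for $A$ of bound $M$ and set
\[
J=\bigl\{x\in B\colon \text{the net }\bigl(x\circ\Phi(e_\lambda)\bigr)_{\lambda\in\Lambda}\text{ converges}\bigr\},
\qquad
W\colon J\to B,\quad W(x)=\tfrac12\lim_{\lambda\in\Lambda}x\circ\Phi(e_\lambda).
\]
Since $(\Phi(e_\lambda))_{\lambda\in\Lambda}$ is bounded, the maps $x\mapsto x\circ\Phi(e_\lambda)$ are uniformly bounded by $2M\Vert\Phi\Vert$, so a routine $\varepsilon/3$ argument shows $J$ is a closed linear subspace of $B$ and $W$ is continuous with $\Vert W\Vert\le M\Vert\Phi\Vert$; clearly $W$ is linear.

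The key step is the analogue of \eqref{1111}, namely that $\tfrac12\bigl(\Phi(a^2)\circ\Phi(e_\lambda)\bigr)_{\lambda\in\Lambda}\to\Phi(a)^2$ for all $a\in A$, whence $\Phi(a^2)\in J$ and $W(\Phi(a^2))=\Phi(a)^2$. To prove this, fix $a\in A$ and, for each $\lambda$, pick by Hahn--Banach a norm-one $\xi_\lambda\in B^*$ attaining the norm of $\tfrac12\Phi(a^2)\circ\Phi(e_\lambda)-\Phi(a)^2$. The bilinear functional $\varphi_\lambda(u,v)=\langle\Phi(u)\circ\Phi(v),\xi_\lambda\rangle$ is symmetric, satisfies \eqref{1436} by~\eqref{11}, and obeys $\Vert\varphi_\lambda\Vert\le 2\Vert\Phi\Vert^2$. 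Here I would invoke exactly the portion of the proof of Theorem~\ref{t1} that treats the symmetric functional $\varphi_1$ — which rests only on \cite[Theorem~6.1]{zpdbook} (equivalently the last assertion of \cite[Lemma~2.6]{ABEV2019}) together with the bounded approximate identity, and uses no weak amenability — to get a constant $C$, independent of $\lambda$, and $\xi_\lambda'\in A^*$ with $\Vert\xi_\lambda'\Vert\le C\Vert\Phi\Vert^2$ and $\varphi_\lambda(u,v)=\langle u\circ v,\xi_\lambda'\rangle$ for all $u,v\in A$. Evaluating at $(a^2,e_\lambda)$ and at $(a,a)$ gives, just as in the proof of Lemma~\ref{t2}, $\Vert\tfrac12\Phi(a^2)\circ\Phi(e_\lambda)-\Phi(a)^2\Vert=\tfrac12\langle a^2e_\lambda-a^2,\xi_\lambda'\rangle+\tfrac12\langle e_\lambda a^2-a^2,\xi_\lambda'\rangle$, and the right-hand side tends to $0$ because $\Vert a^2e_\lambda-a^2\Vert\to0$, $\Vert e_\lambda a^2-a^2\Vert\to0$ while $\Vert\xi_\lambda'\Vert$ stays bounded.

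Polarizing the identity $W(\Phi(a^2))=\Phi(a)^2$ via $a\circ b=(a+b)^2-a^2-b^2$ yields $\Phi(a\circ b)\in J$ and $W(\Phi(a\circ b))=\Phi(a)\circ\Phi(b)$ for all $a,b\in A$. Since \cite[Theorem~II.16]{AL} lets us write any $a\in A$ as $a=bcb=\tfrac12 b\circ(b\circ c)-\tfrac12 b^2\circ c$, we get $\Phi(a)\in J$, i.e.\ $\Phi(A)\subseteq J$, which completes the first assertion. For the ``moreover'' part: if $B$ has a bounded approximate identity and $\Phi$ is surjective, then $B=\Phi(A)\subseteq J$, so $J=B$ and $W\colon B\to B$ is linear with $W(\Phi(a\circ b))=\Phi(a)\circ\Phi(b)$; Lemma~\ref{l4} applies verbatim and gives surjectivity of $W$.

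The only real obstacle is expository rather than mathematical: one must be careful to quote the symmetric representation $\varphi(a,b)=\langle a\circ b,\xi\rangle$ together with a norm estimate $\Vert\xi\Vert\le C\Vert\varphi\Vert$ in a form that is valid \emph{without} weak amenability — precisely what the $\varphi_1$-part of the proof of Theorem~\ref{t1} supplies. Once that is in hand, every remaining step is a routine repetition of arguments already present in the proofs of Lemmas~\ref{t2} and~\ref{l4}.
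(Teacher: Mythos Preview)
Your argument is correct, but the paper takes a shorter route. Rather than mimicking the Hahn--Banach scheme of Lemma~\ref{t2} (choosing norm-one $\xi_\lambda$, representing each scalar symmetric functional as $\langle u\circ v,\xi'_\lambda\rangle$ with a uniform bound, then proving $W(\Phi(a^2))=\Phi(a)^2$, polarizing, and finally invoking the $bcb$ factorization to get $\Phi(A)\subseteq J$), the paper applies \cite[Theorem~6.1 and Remark~6.2]{zpdbook} \emph{directly} to the $B$-valued bilinear map $(a,b)\mapsto\Phi(a)\circ\Phi(b)$, obtaining a continuous linear $S\colon A\to B$ with $\Phi(a)\circ\Phi(b)=S(a\circ b)$ for all $a,b$. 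Continuity of $S$ then gives $\Phi(e_\lambda)\circ\Phi(a)=S(e_\lambda\circ a)\to 2S(a)$ in norm, so $\Phi(a)\in J$ and $W(\Phi(a))=S(a)$ for \emph{every} $a\in A$ at once; the identity $W(\Phi(a\circ b))=\Phi(a)\circ\Phi(b)$ is then immediate. In effect, your reduction to scalar functionals reproves, via Hahn--Banach and uniform estimates, what the vector-valued form of the cited theorem delivers in one stroke; the extra work (the $a^2$ step, polarization, and the \cite{AL} factorization) becomes unnecessary. Your approach has the minor advantage of citing only the scalar statement already spelled out inside Theorem~\ref{t1}, but the paper's is considerably cleaner.
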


\begin{proof}
Let $(e_\lambda)_{\lambda\in\Lambda}$ be an approximate identity for $A$ of bound $M$.

We define  
\[
J=\left
\{x\in B\colon \text{ the net } \bigl(\Phi(e_\lambda)\circ x\bigr)_{\lambda\in\Lambda}\text{ is convergent}\right\}
\]
and
\[
W\colon J\to B,\quad
W(x)=\lim_{\lambda\in\Lambda}\tfrac{1}{2} \Phi(e_\lambda)\circ x \quad\forall x\in J.
\]
It is clear that $J$ is a linear subspace of $B$ and routine verifications,
using that the net $(\Phi(e_\lambda))_{\lambda\in\Lambda}$ is bounded, 
show that $J$ is a closed linear subspace of $B$.
It is also obvious that $W$ is a continuous linear map with
$\Vert W\Vert\le M\Vert\Phi\Vert$.

Applying  \cite[Theorem 6.1 and Remark 6.2]{zpdbook}  to the continuous bilinear map $\varphi\colon A\times A\to B$ defined by
\[
\varphi(a,b)=\Phi(a)\circ\Phi(b)
\quad\forall a,b\in A
\]
we see  that there exists a continuous linear map $S\colon A\to B$ such that
\begin{equation}\label{1653}
\Phi(a)\circ\Phi(b)=S(a\circ b)
\quad\forall a,b\in A.
\end{equation}
%For this purpose
%we can apply \cite[Theorem 1.2]{ABEV1} (or \cite[Theorem 6.1, Remark 6.2]{zpdbook}) to the continuous bilinear map $\varphi\colon A\times A\to B$ defined by
%\[
%\varphi(a,b)=\Phi(a)\circ\Phi(b)
%\quad\forall a,b\in A.
%\]
%Actually, the result is stated in \cite{ABEV1} only for the case where $A$ is a $C^*$-algebra, but
%an examination of the proof immediately reveals that it holds for every zero product preserving Banach
%algebra having a bounded approximate identity (see also \cite[Lemma 4.1]{ABEV2017}).
For each $a\in A$, we thus have
\[
\Phi(e_\lambda)\circ\Phi(a)=S(e_\lambda\circ a)
\quad\forall\lambda\in\Lambda.
\]
Using  $\lim_{\lambda\in\Lambda}e_\lambda\circ a=2a$ and the continuity of $S$,
we see by taking limit that
\[
\lim_{\lambda\in\Lambda}\Phi(e_\lambda)\circ\Phi(a)= 2 S(a).
\]
This shows that $\Phi(a)\in J$ and that $W(\Phi(a))=S(a)$.
On the other hand, using \eqref{1653}, we see that
\begin{equation}\label{1654}
W(\Phi(a\circ b))=S(a\circ b)=\Phi(a)\circ\Phi(b)
\quad\forall a,b\in A.
\end{equation}

Of course, if $\Phi$ is surjective, then $J=B$. 
Now suppose that, in addition, $B$ has a bounded approximate identity.
Then, on account of \eqref{1654}, Lemma \ref{l4} shows that $W$ is surjective.
\end{proof}

\begin{lemma}\label{32}
Let $A$ and $B$ be a Banach algebras,  
let $\Phi\colon A\to B$ be a continuous linear map, and
let $\omega\in B$.
Suppose that:
\begin{enumerate}
\item[(a)]
$A$ is the closed linear span of its idempotents.
\item[(b)]
$\Phi(a^2)\circ\omega=2\Phi(a)^2$ for each $a\in A$.
%$W(\Phi(a^2))=\Phi(a)^2$ for each $a\in  A$.
%\item[(c)]
%There exists $\omega\in B$ such that $2W(\Phi(a))=\Phi(a)\circ \omega$ for each $a\in A$.
\end{enumerate}
Then $\omega^2\Phi(a)=\Phi(a)\omega^2$ for each $a\in A$.
\end{lemma}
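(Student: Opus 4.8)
The plan is to exploit hypothesis (a) by first verifying the desired identity $\omega^2\Phi(a)=\Phi(a)\omega^2$ on idempotents of $A$ and then extending by linearity and continuity. So let $p\in A$ be an idempotent, i.e.\ $p^2=p$. Applying hypothesis (b) with $a=p$ gives $\Phi(p)\circ\omega = 2\Phi(p)^2$, that is
\[
\Phi(p)\omega+\omega\Phi(p)=2\Phi(p)^2 .
\]
The idea is to extract from this one equation enough commutation information about $\Phi(p)$, $\omega$, and $\omega^2$. First I would left- and right-multiply this relation by $\Phi(p)$ and compare: from $\Phi(p)\cdot(\Phi(p)\omega+\omega\Phi(p))=2\Phi(p)^3$ and $(\Phi(p)\omega+\omega\Phi(p))\cdot\Phi(p)=2\Phi(p)^3$ we obtain
\[
\Phi(p)^2\omega+\Phi(p)\omega\Phi(p)=\Phi(p)\omega\Phi(p)+\omega\Phi(p)^2,
\]
hence $\Phi(p)^2\omega=\omega\Phi(p)^2$, i.e.\ $\omega$ commutes with $\Phi(p)^2$. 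Substituting $\Phi(p)^2=\tfrac12(\Phi(p)\omega+\omega\Phi(p))$ back in, the relation $\omega\Phi(p)^2=\Phi(p)^2\omega$ becomes $\omega^2\Phi(p)+\omega\Phi(p)\omega=\Phi(p)\omega^2+\omega\Phi(p)\omega$, which immediately yields $\omega^2\Phi(p)=\Phi(p)\omega^2$, as desired.

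Having established $\omega^2\Phi(p)=\Phi(p)\omega^2$ for every idempotent $p$ of $A$, I would finish as follows. Both sides of this identity are continuous linear functions of the variable that ranges over $A$ — indeed $a\mapsto \omega^2\Phi(a)$ and $a\mapsto \Phi(a)\omega^2$ are continuous linear maps $A\to B$ since $\Phi$ is continuous and multiplication in $B$ is continuous. These two maps agree on the set of idempotents of $A$, hence on its closed linear span, which by hypothesis (a) is all of $A$. Therefore $\omega^2\Phi(a)=\Phi(a)\omega^2$ for each $a\in A$.

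The only slightly delicate point is the algebraic manipulation in the first paragraph: one must be careful that the single quadratic relation $\Phi(p)\circ\omega=2\Phi(p)^2$ really does force $\omega^2$ into the commutant of $\Phi(p)$, and the route through showing first that $\omega$ commutes with $\Phi(p)^2$ is what makes this work cleanly. Everything after that is the routine idempotent-span-plus-continuity argument, which needs no further comment. I do not expect any genuine obstacle here; hypothesis (a) is exactly tailored to let the two-step reduction go through.
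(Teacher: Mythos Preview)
Your proof is correct and follows essentially the same route as the paper's: for an idempotent $p$ you derive $\Phi(p)\omega+\omega\Phi(p)=2\Phi(p)^2$, multiply this on the left and on the right by $\Phi(p)$ and compare to get $\omega\Phi(p)^2=\Phi(p)^2\omega$, then substitute the expression for $\Phi(p)^2$ back in to obtain $\omega^2\Phi(p)=\Phi(p)\omega^2$, and finally extend by linearity and continuity using hypothesis (a). There is no substantive difference from the paper's argument.
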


\begin{proof}
Let $e\in A$ be an idempotent.
From (b) we see that
\begin{equation}\label{1722b}
\omega\Phi(e)+\Phi(e)\omega=\Phi(e)\circ\omega=\Phi(e^2)\circ\omega=2\Phi(e)^2.
\end{equation}
By multiplying \eqref{1722b} by $\Phi(e)$ on the left we obtain
\begin{equation}\label{1723b}
\Phi(e)\omega\Phi(e)+\Phi(e)^2\omega=2\Phi(e)^3
\end{equation}
and multiplying by $\Phi(e)$ on the right we get
\begin{equation}\label{1724b}
\omega\Phi(e)^2+\Phi(e)\omega\Phi(e)=2\Phi(e)^3.
\end{equation}
From \eqref{1723b} and \eqref{1724b} we arrive at
$\omega\Phi(e)^2= \Phi(e)^2\omega$,
which, on account of \eqref{1722b}, yields
\[
\omega^2\Phi(e)=\Phi(e)\omega^2.
\]
Since $A$ is the closed linear span of its idempotents, it follows that
\[
\omega^2 \Phi(a)=\Phi(a)\omega^2
\quad\forall a\in A.\qedhere 
\]
\end{proof}

In the proof of the next results we will use the first Arens product on the second dual $A^{**}$ 
of a Banach algebra $A$. We will  denote this product by juxtaposition.
Furthermore,
we will use the following basic facts about the weak* continuity of the first Arens product
which the reader can find in \cite{D}.
\begin{enumerate}
\item[(A1)]
For each $a\in A$, the map $\xi\mapsto a\xi$ from $A^{**}$ to itself is weak* continuous.
\item[(A2)]
For each $\xi\in A^{**}$, the map $\zeta\mapsto\zeta\xi$ from $A^{**}$ to itself is weak* continuous.
\item[(A3)]
If $A$ is a $C^*$-algebra, then the product in $A^{**}$ is separately weak* continuous.
\end{enumerate}

\begin{theorem}\label{t32}
Let $A$ be  a $C^*$-algebra, 
let $B$ be a Banach algebra having a bounded approximate identity, and
let $\Phi\colon A\to B$ be a surjective continuous linear map such that for all $a,b\in A$,
\begin{equation*}
ab=ba=0 \implies \ \Phi(a)\circ\Phi(b)=0.
\end{equation*}
Then $\Phi$ is a weighted Jordan homomorphism.
\end{theorem}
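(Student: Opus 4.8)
The plan is to combine the machinery already assembled in this section with the special features of $C^*$-algebras (in particular, the separate weak* continuity of the Arens product, fact (A3), and the fact that a $C^*$-algebra is the closed linear span of its idempotents — indeed of its projections). First I would invoke Lemma \ref{31}: since $A$ is zero product determined and has a bounded approximate identity (both standard for $C^*$-algebras), and since $B$ has a bounded approximate identity and $\Phi$ is surjective, there is a continuous surjective linear map $W\colon B\to B$ with $W(\Phi(a\circ b))=\Phi(a)\circ\Phi(b)$ for all $a,b\in A$; equivalently, writing $S$ for the linearization as in \eqref{1653}, $W\Phi=S$ on $A$ and $W(\Phi(a^2))=\Phi(a)^2$. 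The goal is then to upgrade $W$ to an invertible centralizer of $B$, after which $\Psi=W^{-1}\Phi$ is the desired Jordan homomorphism, exactly as in the proof of Theorem \ref{24}.

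The new ingredient, not available under condition \eqref{00}, is that $W$ need not a priori be multiplicative in the centralizer sense; here one must produce the ``weight'' from an element rather than from the left/right regular representations. My plan is to pass to the bidual $B^{**}$ with the first Arens product and let $\omega\in B^{**}$ be a weak* cluster point of $(\Phi(e_\lambda))_{\lambda}$, where $(e_\lambda)$ is a bounded approximate identity of $A$; this net is bounded, so such $\omega$ exists. From $\Phi(e_\lambda)\circ\Phi(a)=S(e_\lambda\circ a)\to 2S(a)$ and fact (A1) (the map $\xi\mapsto a\xi$, and symmetrically multiplication by fixed elements of $B\subseteq B^{**}$, is weak* continuous) one gets $\omega\circ\Phi(a)=2S(a)=2W(\Phi(a))$ in $B^{**}$; in particular, applying this with $a$ replaced so that $\Phi(a)=\Phi(a)^2$-type expressions appear, $\Phi(a^2)\circ\omega=2\Phi(a)^2$ for all $a\in A$. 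Now Lemma \ref{32}, applied with this $\omega$ and with $B$ replaced by $B^{**}$ (which still makes sense since $A$ is the closed linear span of its idempotents), gives $\omega^2\Phi(a)=\Phi(a)\omega^2$ for all $a$, and since $\Phi$ is surjective, $\omega^2$ is central in $B^{**}$ for $B$ — more precisely, $\omega^2 b=b\omega^2$ for all $b\in B$.

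Next I would identify $W$ in terms of $\omega$. For fixed $a\in A$ one has $W(\Phi(a))=S(a)=\tfrac12\,\omega\circ\Phi(a)$, and I would like to conclude $W(b)=\tfrac12(\omega b+b\omega)$ for all $b\in B$; surjectivity of $\Phi$ gives this on $\Phi(A)=B$, but the right-hand side lives in $B^{**}$, so one needs $\omega b+b\omega\in B$ for every $b$, which holds precisely because it equals $2W(b)\in B$. With this formula in hand, the centralizer identity $W(xy)=xW(y)=W(x)y$ for $x,y\in B$ should follow from a short computation using that $\omega^2$ is central together with the relation $\omega\Phi(a)^2=\Phi(a)^2\omega$ obtained inside the proof of Lemma \ref{32} (which, via density of the span of idempotents and surjectivity of $\Phi$, propagates to $\omega$ commuting with all of $B^2$, hence with all of $B$ since $B$ has a bounded approximate identity and is thus idempotent as $B=\overline{B^2}$ — one has to be a little careful here, working with the closed span and continuity of $W$). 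Finally, $W$ is already surjective by Lemma \ref{31}, and injectivity follows by the same kind of argument as in Lemma \ref{t3}: if $W(x)=0$ then $x\Phi(a)^2=x W(\Phi(a^2))=W(x\Phi(a^2))=W(x)\,\Phi(a^2)/\!\ldots=0$ — more cleanly, $0=W(x)\circ\Phi(a)$-type relations together with $\omega$-centrality force $xB^3=B^3x=\{0\}$, and faithfulness of $B$ (automatic from the bounded approximate identity) gives $x=0$, exactly as in the elementary exercise spelled out in the proof of Lemma \ref{t3}. Once $W$ is an invertible centralizer, set $\Psi=W^{-1}\Phi$; it is surjective, continuous, and from $W(\Phi(a\circ b))=\Phi(a)\circ\Phi(b)$ we get $\Psi(a\circ b)=\Psi(a)\circ\Psi(b)$, so $\Phi=W\Psi$ is a weighted Jordan homomorphism.

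I expect the main obstacle to be the bookkeeping around the passage to $B^{**}$: one must ensure that the element $\omega$ (and its square) interact correctly with $W$ and with $B$, that the various limits are legitimate with respect to the appropriate Arens-product continuity (A1)–(A3), and that the centrality conclusions of Lemma \ref{32} — proved there for an element of $B$ — transfer to a bidual element in a way that still yields the centralizer property of $W$ on $B$ itself. The $C^*$ hypothesis on $A$ enters both through ``closed span of projections'' (so Lemma \ref{32} applies) and, if needed, through (A3) for handling products of two cluster points; if one only cluster point $\omega$ is used the lighter facts (A1)–(A2) suffice, which is the route I would try first.
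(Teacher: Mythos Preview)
Your overall architecture is right, and in fact what you outline is essentially the paper's proof of Theorem~\ref{tapp} for $\mathcal{A}(X)$: take a weak* cluster point $\omega$ of $(\Phi(e_\lambda))$ in $B^{**}$, deduce $\Phi(a^2)\circ\omega=2\Phi(a)^2$, apply Lemma~\ref{32}, and finish. The problem is that this route relies on the hypothesis of Lemma~\ref{32} that $A$ is the closed linear span of its idempotents, and your claim that ``a $C^*$-algebra is the closed linear span of its idempotents --- indeed of its projections'' is simply false. Take $A=C([0,1])$: its only idempotents are $0$ and $1$, so their span is the constants. More generally, any $C^*$-algebra without nontrivial projections (e.g.\ $C_0(\mathbb R)$) is a counterexample. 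So Lemma~\ref{32} cannot be applied to $\Phi$ on $A$.

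This is exactly why the paper does more work. It passes to the von Neumann algebra $A^{**}$, which \emph{is} the closed span of its projections, sets $\omega=\Phi^{**}(1)$, and applies Lemma~\ref{32} to $\Phi^{**}\colon A^{**}\to B^{**}$. For that one needs the full bidual identity
\[
W^{**}(\Phi^{**}(x\circ y))=\Phi^{**}(x)\circ\Phi^{**}(y)\quad\forall x,y\in A^{**},
\]
and here (A1)--(A2) alone do not suffice: in the iterated limit $\lim_j\lim_i\Phi(a_i)\Phi(b_j)$ one must, after the inner limit, multiply the element $\Phi^{**}(x)\in B^{**}\setminus B$ on the left, and left multiplication in $B^{**}$ by a bidual element is \emph{not} weak* continuous in general. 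The paper handles this via the Johnson--Kadison--Ringrose separately-weak*-continuous extension of $(a,b)\mapsto\Phi(a)\Phi(b)$ (using the $C^*$-structure of $A$), together with (A3) for the $A^{**}$ side. So the $C^*$ hypothesis is used twice --- projections in $A^{**}$, and the JKR extension --- not just once, and the ``lighter route via (A1)--(A2) only'' that you propose cannot reach Lemma~\ref{32}.

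A smaller point: your argument that $\omega$ commutes with all of $B$ is also off. Lemma~\ref{32} gives $\omega\Phi(e)^2=\Phi(e)^2\omega$ only for \emph{idempotents} $e$, not for general $a$, so you cannot directly conclude that $\omega$ commutes with $\{\Phi(a)^2:a\in A\}$. The paper's route is cleaner: once $\omega^2 b=b\omega^2$ for all $b\in B$ and $W(b)=\tfrac12(\omega b+b\omega)$, a two-line computation gives $\omega W(b)=W(b)\omega$, and surjectivity of $W$ then yields $\omega b=b\omega$ for all $b\in B$. From there $W(b)=\omega b$ is a centralizer, and injectivity is immediate from $bB=bW(B)=W(b)B=\{0\}$ together with faithfulness of $B$; the $B^3$-argument from Lemma~\ref{t3} is not needed.
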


\begin{proof}
By Lemma \ref{31} there exists a surjective continuous linear map $W\colon B\to B$ such that
\begin{equation}\label{1340}
W(\Phi(a\circ b))=\Phi(a)\circ\Phi(b)\quad\forall a,b\in A.
\end{equation}

We write 
$\Phi^{**}\colon A^{**}\to B^{**}$ and $W^{**}\colon B^{**}\to B^{**}$ 
for the second duals of the continuous linear maps 
$\Phi\colon A\to B$ and $W\colon B\to B$, respectively.
We claim that
\begin{equation}\label{2005}
W^{**}(\Phi^{**}(x\circ y))=\Phi^{**}(x)\circ\Phi^{**}(y)
\quad\forall x,y\in A^{**}.
\end{equation}
Set $x,y\in A^{**}$, and take nets $(a_i)_{i\in I}$ and $(b_j)_{j\in J}$ in $A$ such that
\begin{equation*}
\begin{split}
(a_i)_{i\in I}\to x,&\\
(b_j)_{j\in J}\to y&\quad\text{in }(A^{**},\sigma(A^{**},A^*)).
\end{split}
\end{equation*}
On account of \eqref{1340}, we have
\begin{equation}\label{1340b}
W(\Phi(a_ib_j+b_ja_i))=\Phi(a_i)\Phi(b_j)+\Phi(b_j)\Phi(a_i)\quad\forall i\in I, \ \forall j\in J,
\end{equation}
and the task is now to take the iterated limit $\lim_{j\in J}\lim_{i\in I}$ on both sides of the above equation.
Throughout the proof, the limits $\lim_{i\in I}$ and $\lim_{j\in J}$ are taken with respect to the weak* topology.
From (A3)
and the weak* continuity of both $\Phi^{**}$ and $W^{**}$ we deduce that
\begin{equation}\label{1205}
\begin{split}
\lim_{j\in J}\lim_{i\in I}W^{**}\bigl(\Phi^{**} (a_ib_j+b_ja_i)\bigr)
&=
\lim_{j\in J}W^{**}\bigl(\Phi^{**}(xb_j+b_jx)\bigr)\\
&=
W^{**}\bigl(\Phi^{**}(xy+yx)\bigr).
\end{split}
\end{equation}
From (A1)-(A2) (applied to the Arens product of $B^{**}$) and the weak* continuity of $\Phi^{**}$
we deduce that
\begin{equation}\label{1206}
\begin{split}
\lim_{j\in J}\lim_{i\in I}\Phi(b_j)\Phi(a_i)
&=
\lim_{j\in J}\Phi(b_j)\Phi^{**}(x)\\
&=
\Phi^{**}(y)\Phi^{**}(x).
\end{split}
\end{equation}
The remaining iterated limits
\[
\lim_{j\in J}\lim_{i\in I}\Phi(a_i)\Phi(b_j)
\]
must be treated with much more care than the previous ones.
We regard $A$ as a $C^*$-algebra acting on the Hilbert space of its universal representation, 
and we regard the continuous bilinear map 
\[
A\times A\to B,\quad
(a,b)\to\Phi(a)\Phi(b)
\]
as a continuous bilinear map with values in the Banach space $B^{**}$ which 
is separately ultraweak-weak* continuous.
By applying \cite[Theorem 2.3]{JKR}, 
we obtain that the bilinear map above extends uniquely, without change of norm, to a continuous bilinear map 
$\phi\colon A^{**}\times A^{**}\to B^{**}$ which is separately weak* continuous.
From this, and using (A1)-(A2) and the weak* continuity of $\Phi^{**}$, we obtain
\begin{equation}\label{1207}
\begin{split}
\lim_{j\in J}\lim_{i\in I}\Phi(a_i)\Phi(b_j)
&=
\lim_{j\in J}\lim_{i\in I}\phi(a_i,b_j)\\
&=
\phi(x,y)\\
&=
\lim_{i\in I}\lim_{j\in J}\phi(a_i,b_j)\\
&=
\lim_{i\in I}\lim_{j\in J}\Phi(a_i)\Phi(b_j)\\
&=
\lim_{i\in I}\Phi(a_i)\Phi^{**}(y)\\
&=
\Phi^{**}(x)\Phi^{**}(y).
\end{split}
\end{equation}
From \eqref{1340b}, \eqref{1205}, \eqref{1206}, and \eqref{1207}, it may be concluded that
\begin{align*}
W^{**}(\Phi^{**}(x\circ y))=&
\lim_{j\in J}\lim_{i\in I}W(\Phi(a_i\circ b_j))
\\
=&\lim_{j\in J}\lim_{i\in I}\Phi(a_i)\Phi(b_j)+
\lim_{j\in J}\lim_{i\in I}\Phi(b_j)\Phi(a_i)\\=&
\Phi^{**}(x)\circ\Phi^{**}(y),
\end{align*}
and \eqref{2005} is proved.

Define $\omega=\Phi^{**}(1)\in B^{**}$, where $1$ is the unit of the von Neumann algebra $A^{**}$.
Setting $x=y$ in \eqref{2005} we conclude that
\begin{equation}\label{1459a}
W^{**}(\Phi^{**}(x^2))=\Phi^{**}(x)^2
\quad\forall x\in A^{**},
\end{equation}
and setting $y=1$ in \eqref{2005} we see that
\begin{equation}\label{1459}
2W^{**}(\Phi^{**}(x))=\omega\Phi^{**}(x)+\Phi^{**}(x)\omega
\quad\forall x\in A^{**}.
\end{equation}
From \eqref{1459a} and \eqref{1459} we deduce that
\[
\Phi^{**}(x^2)\circ\omega=2\Phi^{**}(x)^2\quad\forall x\in A^{**}.
\]
Since $A^{**}$ is a von Neumann algebra, it is the closed linear span of its projections
and we are in a position to apply Lemma \ref{32}, which gives
\begin{equation*}
\omega^2 \Phi^{**}(x))=\Phi^{**}(x)\omega^2\quad\forall x\in A^{**}.
\end{equation*}
In particular,
\begin{equation*}
\omega^2 \Phi(a)=\Phi(a)\omega^2
\quad\forall a\in A.
\end{equation*}
Since $\Phi$ is surjective, it may be concluded that
\begin{equation*}
\omega^2 u=u\omega^2\quad\forall u\in B.
\end{equation*}
From \eqref{1459} we see that, for each $a\in A$,
\begin{align*}
\omega W(\Phi(a))=&
\omega\tfrac{1}{2}\bigl(\omega\Phi(a)+\Phi(a)\omega\bigr)\\=&
\tfrac{1}{2}\bigl(\omega^2\Phi(a)+\omega\Phi(a)\omega\bigr)
\\
=&\tfrac{1}{2}\bigl(\Phi(a)\omega^2+\omega\Phi(a)\omega\bigr)\\=&
\tfrac{1}{2}\bigl(\Phi(a)\omega+\omega\Phi(a)\bigr)\omega\\=&
W(\Phi(a))\omega.
\end{align*}
Since both $\Phi$ and $W$ are surjective, it may be concluded that
\begin{equation}\label{1130c}
\omega u=u\omega\quad\forall u\in B.
\end{equation}
From \eqref{1459} we now deduce that 
\[
W(\Phi(a))=
\tfrac{1}{2}\bigl(\omega\Phi(a)+\Phi(a)\omega\bigr)=\omega\Phi(a)
\quad\forall a\in A,
\]
and hence that
\[
W(u)=\omega u\quad\forall u\in B.
\]
Furthermore, for all $a,b\in B$, using \eqref{1130c} we obtain
\[
W(ab)=\omega ab=W(a)b,\]\[
W(ab)=(\omega a)b=(a\omega)b=aW(b),
\]
whence $W$ is a centralizer on $B$.
In order to prove that $W$ is an invertible centralizer, it remains to show that $W$ is injective.
If $a\in\ker W$, then
\[
aB=aW(B)=W(a)B=\{0\}
\]
and therefore $a=0$.

Since $W$ is an invertible centralizer on $B$, \eqref{1340} shows that $W^{-1}\Phi$ is a Jordan
homomorphism, and hence  $\Phi$ is a weighted Jordan homomorphism. 
\end{proof}

Our final concern will be the algebra $\mathcal{A}(X)$  of approximable operators on a Banach space $X$.
It is shown in \cite{ABEV0} that $\mathcal{A}(X)$ has the so-called property $\mathbb{B}$ for each Banach
space $X$ (see also \cite[Example 5.15]{zpdbook}).
Further, it is known that $\mathcal{A}(X)$ has a bounded left approximate identity if and only if the Banach
space $X$ has the bounded approximation property (see \cite[Theorem 2.9.37]{D}). In this case,
$\mathcal{A}(X)$ is actually a zero product determined Banach algebra (see \cite[Lemma 2.3]{ABEV0} or, alternatively,
\cite[Proposition 5.5]{zpdbook}). Another remarkable feature of $\mathcal{A}(X)$ is that it has
a bounded approximate identity if and only if $X^*$ has the bounded approximation property (see \cite[Theorem 2.9.37]{D}).

\begin{theorem}\label{t33}\label{tapp}
Let $X$ be a Banach space such that $X^*$ has the bounded approximation property,
let $B$ be a Banach algebra having a bounded approximate identity, and
let $\Phi\colon \mathcal{A}(X)\to B$ be a surjective continuous linear map such that for all $S,T\in \mathcal{A}(X)$,
\begin{equation*}
ST=TS=0 \implies \ \Phi(S)\circ\Phi(T)=0.
\end{equation*}
Then $\Phi$ is a weighted Jordan homomorphism.
\end{theorem}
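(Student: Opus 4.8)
The plan is to follow the proof of Theorem \ref{t32}, with one simplification: the passage to the bidual of the $C^*$-algebra (used there to produce a Banach algebra that is the closed linear span of its idempotents and on which the Arens product is separately weak* continuous) is not needed, because $\mathcal{A}(X)$ is itself the closed linear span of its idempotents. Indeed, the finite-rank operators are dense in $\mathcal{A}(X)$ and are linearly spanned by the rank-one operators $\xi\otimes\varphi$ with $\xi\in X$ and $\varphi\in X^*$; each such operator is a linear combination of at most two rank-one idempotents, since for $\varphi\ne 0$ we may choose $\eta\in X$ with $\varphi(\eta)=1$, and then $\eta\otimes\varphi$ is idempotent, as is $(\xi+\eta)\otimes\varphi$ when $\varphi(\xi)=0$ and $\varphi(\xi)^{-1}\,\xi\otimes\varphi$ when $\varphi(\xi)\ne0$, so that $\xi\otimes\varphi$ is visibly a combination of these. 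Moreover, since $X^*$ has the bounded approximation property, so does $X$, whence $\mathcal{A}(X)$ is zero product determined; and $\mathcal{A}(X)$ has a bounded approximate identity because $X^*$ has the bounded approximation property. Thus Lemmas \ref{31} and \ref{32} apply with $A=\mathcal{A}(X)$.

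I would begin by invoking Lemma \ref{31} to obtain a surjective continuous linear map $W\colon B\to B$ with $W(\Phi(S\circ T))=\Phi(S)\circ\Phi(T)$ for all $S,T\in\mathcal{A}(X)$, recalling from its proof that $W(x)=\lim_\lambda\tfrac12\,\Phi(e_\lambda)\circ x$ for every $x\in B$, where $(e_\lambda)$ is a bounded approximate identity of $\mathcal{A}(X)$. Passing to the first Arens product on $B^{**}$, fix a weak* cluster point $\omega\in B^{**}$ of the bounded net $(\Phi(e_\lambda))$; the weak* continuity on $B^{**}$ of left and right multiplication by elements of $B$ --- facts (A1) and (A2) --- applied to the above formula gives
\[
2W(x)=\omega x+x\omega\qquad(x\in B).
\]
Setting $S=T$ in the identity for $W$ yields $W(\Phi(S^2))=\Phi(S)^2$, and hence $\Phi(S^2)\circ\omega=2\Phi(S)^2$ for every $S\in\mathcal{A}(X)$. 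Now apply Lemma \ref{32} with $B^{**}$ (under the first Arens product) in the role of $B$, with $\Phi$ composed with the canonical embedding $B\hookrightarrow B^{**}$ in the role of $\Phi$, and with $\omega$ in the role of $\omega$; since $\mathcal{A}(X)$ is the closed linear span of its idempotents, this gives $\omega^2\Phi(S)=\Phi(S)\omega^2$ for every $S$, and the surjectivity of $\Phi$ upgrades this to $\omega^2 u=u\omega^2$ for all $u\in B$.

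From here the argument runs exactly as in the closing part of the proof of Theorem \ref{t32}: from $2W(u)=\omega u+u\omega$ and the centrality of $\omega^2$ one checks $\omega W(u)=W(u)\omega$ for $u\in B$, and the surjectivity of $W$ then forces $\omega u=u\omega$ for all $u\in B$; consequently $W(u)=\omega u$, so $W$ is a centralizer on $B$, and $W$ is injective because $a\in\ker W$ yields $aB=aW(B)=W(a)B=\{0\}$ while $B$ is faithful. Hence $W$ is an invertible centralizer, $\Psi:=W^{-1}\Phi$ is a surjective continuous Jordan homomorphism, and $\Phi=W\Psi$ is a weighted Jordan homomorphism.

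I do not foresee a serious obstacle: the one genuine idea is the observation that $\mathcal{A}(X)$ is spanned by its idempotents, which is precisely what makes the delicate bidual/Arens-product analysis of the $C^*$-algebra case --- in particular the reliance on the separate weak* continuity (A3) --- superfluous here; we use the bidual of $B$ only as a home for the cluster point $\omega$. The sole point requiring a small verification is that Lemma \ref{32} may legitimately be applied with $B^{**}$ as target and $\omega\in B^{**}$, which is harmless because that lemma is algebraic apart from a continuity argument and the embedding $B\hookrightarrow B^{**}$ is an isometric homomorphism.
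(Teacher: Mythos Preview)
Your proposal is correct and follows essentially the same route as the paper: apply Lemma~\ref{31} to obtain the surjective map $W$, pick a weak* accumulation point $\omega\in B^{**}$ of $(\Phi(e_\lambda))$ and use (A1)--(A2) to get $2W(x)=\omega x+x\omega$, then invoke Lemma~\ref{32} via the fact that $\mathcal{A}(X)$ is the closed linear span of its idempotents, and finish exactly as in Theorem~\ref{t32}. The only cosmetic differences are that the paper passes to a convergent subnet rather than speaking of a cluster point, and re-derives the identity $2W(\Phi(T))=\omega\Phi(T)+\Phi(T)\omega$ by taking limits in $W(\Phi(E_\lambda\circ T))=\Phi(E_\lambda)\circ\Phi(T)$ rather than quoting the explicit formula for $W$ from the proof of Lemma~\ref{31}; your added justification that rank-one operators are linear combinations of rank-one idempotents is a welcome detail the paper leaves implicit.
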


\begin{proof}
We begin by applying Lemma \ref{31} to obtain a surjective continuous linear map $W\colon B\to B$ such that
\begin{equation}\label{1719}
W(\Phi(S\circ T))=\Phi(S)\circ\Phi(T)
\quad
\forall S,T\in \mathcal{A}(X).
\end{equation}

Let $(E_\lambda)_{\lambda\in\Lambda}$ be a bounded approximate identity for $\mathcal{A}(X)$.
Then we regard $\bigl(\Phi(E_\lambda)\bigr)_{\lambda\in\Lambda}$ as a bounded net in the second dual $B^{**}$
of $B$. It follows from the Banach-Alaoglu theorem that this net has a $\sigma(B^{**},B^*)$-convergent subnet.
Hence, by passing to a subnet, we may supose that $(E_\lambda)_{\lambda\in\Lambda}$ is a bounded approximate 
identity for $\mathcal{A}(X)$ such that 
\[
\lim_{\lambda\in\Lambda}\Phi(E_\lambda)= \omega
\quad\text{in }(B^{**},\sigma(B^{**},B^*))
\]
for some $\omega\in B^{**}$.

Set $T\in\mathcal{A}(X)$. 
Writting $E_\lambda$ for $S$ in \eqref{1719}, we obtain 
\begin{equation}\label{1720}
W(\Phi(E_\lambda T+TE_\lambda))=\Phi(E_\lambda)\Phi(T)+\Phi(T)\Phi(E_\lambda)\quad\forall \lambda\in\Lambda,
\end{equation}
and our next goal is to take limits on both sides of \eqref{1720}. 
Since
\[
\lim_{\lambda\in\Lambda}(E_\lambda T+TE_\lambda)= 2T
\quad\text{in }(\mathcal{A}(X),\Vert\cdot\Vert), 
\]
the continuity of $W\Phi$ gives
\begin{equation}\label{114}
\lim_{\lambda\in\Lambda}W(\Phi(E_\lambda T+TE_\lambda))=2W(\Phi(T))
\quad\text{in } (B,\Vert\cdot\Vert).
\end{equation}
On the other hand,
since
\[
\lim_{\lambda\in\Lambda}\Phi(E_\lambda)=\omega
\quad\text{in }(B^{**},\sigma(B^{**},B^*))
\]
and $\Phi(T)\in B$, we can appeal to (A1)-(A2) to deduce that
\begin{equation}\label{115}
\begin{split}
\lim_{\lambda\in\Lambda}
\Phi(E_\lambda)\Phi(T)=\omega\Phi(T)
&,\\
\lim_{\lambda\in\Lambda}
\Phi(T)\Phi(E_\lambda)=\Phi(T)\omega
&\quad\text{in }(B^{**},\sigma(B^{**},B^*)).
\end{split}
\end{equation}
Hence, taking limits in \eqref{1720} and using \eqref{114} and \eqref{115},
we obtain
\begin{equation}\label{1721}
2W(\Phi(T))=\omega\Phi(T)+\Phi(T)\omega.
\end{equation}

Having  \eqref{1719} and \eqref{1721} and
 using that  $\mathcal{A}(X)$ is the closed linear span of its idempotents, 
we can now apply Lemma \ref{32} to obtain
\[
\omega^2 \Phi(T)=\Phi(T)\omega^2
\quad\forall T\in\mathcal{A}(X).
\]
From the surjectivity of $\Phi$  we deduce that
\begin{equation*}
\omega^2 a=a\omega^2
\quad\forall a\in B.
\end{equation*} 
By using the same method as in the proof of Theorem \ref{t32} we verify that
$\omega a=a\omega$ for each $a\in B$, that $W(a)=\omega a$ for each $a\in A$,
and that $W$ is an invertible centralizer on $B$ such that $W^{-1}\Phi$ is a Jordan
homomorphism. 
\end{proof}

\end{document}